\newtheorem{theorem}{Theorem}
\newtheorem{corollary}[theorem]{Corollary}
\newtheorem{definition}[theorem]{Definition}
\newtheorem*{example*}{Example}
\newtheorem{lemma}[theorem]{Lemma}
\newtheorem{prop}[theorem]{Proposition}
\newtheorem{remark}[theorem]{Remark}
\newcommand{\erre}{\mathbb{R}}
\newcommand{\p}{\mathbb{P}}
\newcommand{\enne}{\mathbb{N}}
\newcommand{\Hess}{\operatorname{Hess}}
\newcommand{\ricc}{\operatorname{Ric}}
\newcommand{\sgn}{\operatorname{sgn}}
\newcommand{\hess}{\operatorname{hess}}
\newcommand{\di}{\mathrm{d}} 
\newcommand{\R}{\operatorname{R}}
\newcommand{\ra}{\rightarrow}
\newcommand{\norm}[1]{{\left\|#1\right\|}}              
\newcommand{\pair}[1]{\left\langle#1\right\rangle}      
\renewcommand{\hat}[1]{\widehat{#1}}
\begin{document}

\title[Spacelike hyp. of constant $k$-mean curvature in GRW spacetimes]{Spacelike hypersurfaces of constant higher order mean curvature in generalized Robertson-Walker spacetimes}

\author[L. J. Al\'ias]{Luis J. Al\'ias}
\address{Departamento de Matem\'aticas, Universidad de Murcia, Campus de Espinardo, 30100 Espinardo, Murcia, Spain.}
\email{ljalias@um.es}
\thanks{This work was partially supported by MICINN project MTM2009-10418 and Fundaci\'{o}n S\'{e}neca
project 04540/GERM/06, Spain.
This research is a result of the activity developed within the framework of the Programme in Support of Excellence Groups of the Regi\'{o}n de Murcia, Spain, by Fundaci\'{o}n S\'{e}neca, Regional Agency
for Science and Technology (Regional Plan for Science and Technology 2007-2010).}
\author[D. Impera]{Debora Impera}
\address{Dipartimento di Matematica,
Universit\`a
degli studi di Milano, via Saldini 50, I-20133 Milano, Italy.}
\email{debora.impera@unimi.it}
\author[M. Rigoli]{Marco Rigoli}
\address{Dipartimento di Matematica,
Universit\`a
degli studi di Milano, via Saldini 50, I-20133 Milano, Italy.}
\email{marco.rigoli@unimi.it}
\date{August 13, 2011}

\subjclass[2000]{53C40, 53C42, 53C50}

\begin{abstract}
In this paper we analyze the problem of uniqueness for spacelike hypersurfaces with constant higher order mean curvature in
generalized Robertson-Walker spacetimes. We consider first the case of compact spacelike hypersurfaces, completing some
previous results given in \cite{aliascolares}. We next extend these results to the complete noncompact case. In that case, our approach is based on the use of a generalized version of the Omori-Yau maximum principle for trace type differential operators, recently given in \cite{aliasimperarigoli}.
\end{abstract}
\maketitle
\section{Introduction}

Spacelike hypersurfaces in spacetimes are objects of increasing interest in recent years, both from physical and mathematical points of view. A basic question on this topic is the problem of uniqueness of spacelike hypersurfaces with constant mean curvature in certain spacetimes, and, more generally, that of spacelike hypersurfaces with constant higher order mean curvature.

In a recent paper, Al\'ias and Colares \cite{aliascolares} studied in depth the problem of uniqueness for compact spacelike hypersurfaces with constant
higher order mean curvature in spatially closed generalized Robertson-Walker spacetimes, that is, in generalized Robertson-Walker spacetimes having a compact
Riemannian factor. Their approach was based in the use of the so called Newton
transformations $P_k$ and their associated second order differential operators $L_k$ (see section \ref{prel}), as well as in the use of some general
Minkowski integral formulae for compact hypersurfaces.

In this paper, which is a natural continuation of \cite{aliascolares}, we go deeper into this study.
We consider first the case of compact spacelike hypersurfaces, extending the analysis and completing the results given in \cite{aliascolares}.
Specifically, instead of considering the assumption of the null convergence condition (NCC) as in \cite{aliascolares}, here we replace it by the condition
$(\log\rho)''\leq 0$ on the warping function $\rho$, which was introduced by Montiel in \cite{montiel2} and it is closely related to the timelike convergence
condition (TCC) and the NCC (see the beginning of Section \ref{compact} for further details). We next extend these results to the complete noncompact case. In that
case, our approach is based on the use of a generalized version of the Omori-Yau maximum principle for trace type differential operators which includes the
operators $L_k$ that has been recently introduced by the authors in \cite{aliasimperarigoli} for the study of hypersurfaces in Riemannian warped products.

The paper is organized as follows. After a preliminaries Section, where we fix notation and collect some basic results, we consider in Section \ref{compact} the problem of uniqueness for compact spacelike hypersurfaces in spatially closed generalized Robertson-Walker spacetimes. In particular, we obtain the following result which completes previous results in \cite{aliascolares}  (Theorem \ref{thmh2constcomplor}):
\begin{quote}
\begin{it}
Let $-I \times_{\rho} \p^n$ be a spatially closed generalized Robertson-Walker spacetime with warping function satisfying
$(\log \rho)''\leq 0$. The only compact spacelike hypersurfaces with $H_k$ constant, $2\leq k\leq n$, contained in a slab
$\Omega(t_1,t_2)$ on which $\rho'$ does not vanish are slices.
\end{it}
\end{quote}
In Section \ref{complete} we extend the previous results to the complete noncompact case with the aid of the generalized version of the Omori-Yau maximum principle given in \cite{aliasimperarigoli}. Among others, we obtain the following uniqueness result (Theorem \ref{thmGRW2}):
\begin{quote}
\begin{it}
Let $-I \times_{\rho} \p^n$ be a generalized Robertson-Walker spacetime whose warping function satisfies
$(\log \rho)''\leq 0$, with equality only at isolated points, and suppose that $\p^n$ has sectional curvature bounded from below.
Let $f:\Sigma^n \ra -I\times_{\rho} \p^n$ be a complete
spacelike hypersurface contained in a slab and assume that either
\begin{itemize}
\item[(i)] $H_2$ is a positive constant, or
\item[(ii)] $H_k$ is constant (with $k\geq 3$) and there exists an elliptic
point in $\Sigma$.
\end{itemize}
If  $\sup_{\Sigma}|H_1|< +\infty$, then $\Sigma$ is a slice.
\end{it}
\end{quote}
Finally, in Section \ref{further}, we give a number of further results extending to the complete case previous uniqueness results in \cite{aliascolares}.

\section{Preliminaries}\label{prel}

In what follows we consider a $n$-dimensional Riemannian manifold $\p^n$ and let $I$ be an open interval of the real line. We let $M^{n+1}:=-I \times_{\rho} \p^n$ to denote the Lorentzian warped product endowed with the Lorentzian metric
$$
\pair{,}=-\pi_I(dt^2)+\rho^2(\pi_I)\pi_{\p}(\pair{,}_{\p}).
$$
Following the terminology used in \cite{aliasromerosanchez} we will refer to $-I\times_{\rho}\p^n$ as a generalized Robertson-Walker spacetime. Observe that $\rho(t)\frac{\partial}{\partial t}$ is a closed conformal vector field on $M$ which determines a foliation $t\ra\p_t:=\{t\}\times\p$ of $M$ by complete totally umbilical spacelike hypersurfaces with constant mean curvature.\\
Consider a spacelike hypersurface $f:\Sigma^n \ra M^{n+1}$. In this case, since $T:=\frac{\partial}{\partial t}$ is a unitary timelike vector field globally defined on $M^{n+1}$, there exists a unique unitary timelike normal field $N$ globally defined on $\Sigma$ with the same orientation as $T$. Hence
$$
\Theta:=\pair{N,T}\leq -1 <0.
$$
We will refer to that normal field $N$ as the future-pointing Gauss map of the hypersurface.
We let $A: T\Sigma \rightarrow T\Sigma $ denote the second fundamental form of the immersion. Its eigenvalues $k_1,...,k_n$ are the principal curvatures of the hypersurface. Their elementary symmetric functions
$$S_k=\sum_{i_1<..<i_k}k_{i_1}\cdots k_{i_k},\qquad k=1,...,n,\qquad S_0=1,$$
define the $k$-mean curvatures of the immersion via the formula
$$
{n \choose k}H_k= (-1)^kS_k.
$$
Thus $H_1=-1/n\mathrm{Tr}(A)=H$ is the mean curvature and  $$n(n-1)H_2=\overline{S}-S+2\overline{\ricc}(N,N),$$ where $S$ and $\overline{S}$ are, respectively, the scalar curvature of $\Sigma$ and $M^{n+1}$ and $\overline{\ricc}$ is the Ricci tensor of the generalized Robertson-Walker spacetime. Even more, when $k$ is even, it follows from the Gauss equation that $H_k$ is a geometric quantity which is related to the intrinsic curvature of $\Sigma^n$.

The classical Newton transformations associated to the immersion are defined inductively by
$$
P_0=I,\qquad P_k={n \choose k}H_kI+AP_{k-1},
$$
for every $k=1,...,n$. It is not difficult to see that
\begin{enumerate}
\item[(a)] $\mathrm{Tr}(P_k)=c_k H_k$,\\
\item[(b)] $\mathrm{Tr}(AP_k)=-c_k H_{k+1}$,\\
\item[(c)] $\mathrm{Tr}(A^2P_k)={n\choose {k+1}}(nH_1H_{k+1}-(n-k-1)H_{k+2})$,
\end{enumerate}
where $c_k=(n-k){n \choose k}=(k+1){n\choose {k+1}}$. We refer the reader to \cite{aliasbrasilcolares} for further details (see also \cite{reilly} for other details about classical Newton tensors for hypersurfaces in Riemannian spaces).\\
Let $\nabla$ be the Levi-Civita connection of $\Sigma$. We define the second order linear differential operator $L_k:C^{\infty}(\Sigma) \ra C^{\infty}(\Sigma)$ associated to $P_k$ by
$$
L_k f=\mathrm{Tr}(P_k \circ \hess f),
$$
where
$$\pair{\hess f(X),Y}=\pair{\nabla_X \nabla f,Y}.
$$
It follows by the definition that the operator $L_k$ is elliptic if and only if $P_k$ is positive definite. Let us state two useful lemmas in which geometric conditions are given in order to guarantee the ellipticity of $L_k$ when $k \geq 1$ (Recall that $L_0=\Delta$ is always elliptic).
\begin{lemma}\label{lemmaelliptl1lor}
Let $\Sigma$ be a spacelike hypersurface immersed into a generalized Robertson-Walker spacetime. If $H_2 >0$ on $\Sigma$, then $L_1$ is an elliptic operator (for an appropriate choice of the Gauss map $N$).
\end{lemma}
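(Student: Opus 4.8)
The plan is to translate the ellipticity of $L_1$ into the positive definiteness of $P_1$, exactly as recorded just before the statement, and then to read off the signs of its eigenvalues from the hypothesis $H_2>0$. Since $P_0=I$ and $H_1=-\frac1n\mathrm{Tr}(A)$, the defining recursion gives
\[
P_1={n\choose1}H_1\,I+AP_0=nH_1\,I+A=-S_1\,I+A,
\]
so that at each point $P_1$ is diagonalized in the same orthonormal basis that diagonalizes the self-adjoint operator $A$ (here $\Sigma$ being spacelike makes the induced metric Riemannian, so the $k_i$ are real), with eigenvalues $\mu_i=-S_1+k_i$ for $i=1,\dots,n$. Thus $L_1$ is elliptic precisely when $\mu_i>0$ for every $i$, and the whole question reduces to controlling the size of each principal curvature $k_i$ against $S_1=\mathrm{Tr}(A)$.

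First I would rewrite the hypothesis. Because the index is even, ${n\choose2}H_2=(-1)^2S_2=S_2$, so $H_2>0$ is equivalent to $S_2>0$. Then I would invoke the elementary identity $S_1^2=|A|^2+2S_2$, where $|A|^2=\mathrm{Tr}(A^2)=\sum_i k_i^2$, which yields $S_1^2=|A|^2+2S_2>|A|^2\ge k_i^2$ for each $i$. In particular $S_1\ne 0$ and $|S_1|>|k_i|$ for all $i$; geometrically this places $(k_1,\dots,k_n)$ in the G\aa rding cone $\{\,|S_1|>|k_i|\ \forall i\,\}$, which is the real source of the conclusion.

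Now comes the choice of orientation. Replacing the Gauss map $N$ by $-N$ reverses the shape operator, $A\mapsto-A$, hence $k_i\mapsto-k_i$ and $S_1\mapsto-S_1$, while leaving the even quantity $S_2$ (and therefore the hypothesis $H_2>0$) untouched. Since $S_1\ne 0$, I may fix the orientation so that $S_1<0$. With this choice, for each $i$ we have $-S_1=|S_1|>|k_i|\ge-k_i$, whence $\mu_i=-S_1+k_i\ge-S_1-|k_i|>0$. Thus every eigenvalue of $P_1$ is positive, $P_1$ is positive definite, and $L_1$ is elliptic, as claimed.

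There is essentially no serious obstacle: the content is the single Newton-type identity $S_1^2=|A|^2+2S_2$, together with careful bookkeeping of the Lorentzian sign normalizations $nH_1=-S_1$ and ${n\choose2}H_2=S_2$. The only point deserving a word of care is that the conclusion is genuinely orientation-dependent, since $P_1$ is built from the odd quantity $H_1$; this is precisely what the clause ``for an appropriate choice of the Gauss map $N$'' accounts for, and it is disposed of by the sign flip above.
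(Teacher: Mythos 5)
Your proof is correct and is essentially the standard argument: the paper itself does not prove this lemma but defers to Lemma 3.2 of \cite{aliascolares} (in turn a consequence of Lemma 3.10 of \cite{elbert}), and that proof rests on exactly the identity $S_1^2=|A|^2+2S_2$ and the orientation flip you use to make every eigenvalue $-S_1+k_i$ of $P_1$ positive. Nothing to add beyond the implicit connectedness of $\Sigma$ needed to fix the sign of $S_1$ globally.
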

For a proof of the Lemma see Lemma 3.2 in \cite{aliascolares}, where they proved it as a consequence of Lemma 3.10 in \cite{elbert}. The next Lemma is a consequence of Proposition 3.2 in \cite{barbosacolares} (see also Lemma 3.3 in \cite{aliascolares}).
\begin{lemma}\label{lemmaelliptlrlor}
Let $\Sigma$ be a spacelike hypersurface immersed into a generalized Robertson-Walker spacetime. If there exists an elliptic point of $\Sigma$, with respect to an appropriate choice of the Gauss map $N$, and $H_{k+1} >0$ on $\Sigma$, $2 \leq k \leq n-1$, then for all $1 \leq j \leq k$ the operator $L_j$ is elliptic.
\end{lemma}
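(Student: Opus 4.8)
The plan is to reduce the statement to the positive definiteness of the Newton transformations $P_1,\dots,P_k$, which by the remark preceding the lemma is exactly the ellipticity of $L_1,\dots,L_k$, and then to establish this positivity by combining a purely algebraic fact about elementary symmetric functions with a connectedness argument on $\Sigma$. Throughout I would fix the Gauss map $N$ so that at the elliptic point the second fundamental form $A$ is negative definite; with the convention ${n\choose j}H_j=(-1)^jS_j$ this is the orientation compatible with the hypothesis $H_{k+1}>0$.

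First I would record the eigenvalues of the $P_j$. If $k_1,\dots,k_n$ are the principal curvatures and $S_{j;i}$ denotes the $j$-th elementary symmetric function of $k_1,\dots,k_n$ with $k_i$ omitted, then an induction on $P_j={n\choose j}H_jI+AP_{j-1}$, using $S_j=S_{j;i}+k_iS_{j-1;i}$, shows that the eigenvalue of $P_j$ on the $i$-th principal direction is $(-1)^jS_{j;i}=S_{j;i}(-k_1,\dots,-k_n)$. Hence $P_j$ is positive definite precisely when $S_{j;i}(-k_1,\dots,-k_n)>0$ for every $i$. Writing $\mu(p)=(-k_1(p),\dots,-k_n(p))$ and using homogeneity, the hypothesis $H_{k+1}>0$ becomes $S_{k+1}(\mu)>0$ on all of $\Sigma$, while at the elliptic point every entry of $\mu$ is positive.

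The algebraic input is G\aa rding's theory of the cones attached to the $S_j$, as used in \cite{barbosacolares,elbert}: the set $\Gamma_{k+1}^+:=\set{\mu\in\erren : S_1(\mu)>0,\dots,S_{k+1}(\mu)>0}$ coincides with the connected component of the open set $\set{S_{k+1}>0}$ containing the vectors with all positive entries, and on $\Gamma_{k+1}^+$ one has $S_{j;i}(\mu)>0$ for every $i$ and every $0\le j\le k$. Granting this, it remains to prove that $\mu(p)\in\Gamma_{k+1}^+$ for all $p\in\Sigma$, for which I would run a clopen argument on $V:=\set{p\in\Sigma : \mu(p)\in\Gamma_{k+1}^+}$. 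The set $V$ is open, since the conditions $S_j(\mu(p))>0$, $1\le j\le k+1$, defining it are open and $p\mapsto S_j(\mu(p))$ is continuous; it is nonempty, since it contains the elliptic point; and it is closed in $\Sigma$, since if $p$ is a limit of points of $V$ then $\mu(p)\in\ol{\Gamma_{k+1}^+}$ while $S_{k+1}(\mu(p))>0$, and because $\Gamma_{k+1}^+$ is a connected component of $\set{S_{k+1}>0}$ it is relatively closed there, forcing $\mu(p)\in\Gamma_{k+1}^+$ and $p\in V$. As $\Sigma$ is connected, $V=\Sigma$, and the algebraic fact then yields $P_j>0$, i.e. $L_j$ elliptic, for all $1\le j\le k$.

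The main obstacle is the careful bookkeeping of signs in passing between the Lorentzian convention ${n\choose j}H_j=(-1)^jS_j$ and the standard positive-cone formulation of G\aa rding's theory, together with the fact that $\Gamma_{k+1}^+$ is genuinely a connected component of $\set{S_{k+1}>0}$, which is what makes the closedness of $V$ work; establishing or quoting these structural properties of the cone is where the real content lies, whereas the ellipticity equivalence, the eigenvalue computation, and the clopen argument are routine once that input is in hand.
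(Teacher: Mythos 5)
Your proof is correct and is essentially the argument the paper relies on: the paper gives no proof of its own here, deferring to Proposition 3.2 of \cite{barbosacolares} (see also Lemma 3.3 of \cite{aliascolares}), and that proposition is established exactly by your route --- the eigenvalues $(-1)^{j}S_{j;i}$ of $P_j$, the G\aa rding cone $\Gamma_{k+1}^{+}$ realized as the connected component of $\{S_{k+1}>0\}$ containing the positive orthant, and a clopen argument propagating membership in the cone from the elliptic point. The only implicit ingredient worth flagging is the connectedness of $\Sigma$, which your closed--open step requires and which is a standing assumption for hypersurfaces throughout the paper.
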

Recall here that by an elliptic point in a spacelike hypersurface we mean a point of $\Sigma$ where all the principal curvatures are negative, with respect to an appropriate orientation.
In order to apply Lemma \ref{lemmaelliptlrlor} it is convenient to have some geometric conditions guaranteeing the existence of such a point.\\
The following technical lemma is a consequence of a more general result given in \cite{aliasbrasilcolares} and guarantees the existence of the elliptic point in the
compact case. Before stating it, recall from Proposition 3.2 (i) in \cite{aliasromerosanchez} that if a generalized Robertson-Walker spacetime
$-I\times_{\rho}\p^n$ admits a compact spacelike hypersurface, then the Riemannian factor $\p^n$ is necessarily compact. In that case, $-I\times_{\rho}\p^n$ is
said to be a spatially closed generalized Robertson-Walker spacetime.
\begin{lemma}[Lemma 5.3, \cite{aliascolares}]\label{lemma5.3}
Let $f:\Sigma^n\ra-I\times_{\rho}\p^n$ be a compact spacelike hypersurface immersed into a spatially closed generalized Robertson-Walker spacetime, and assume that $\rho'(h)$ does not vanish on $\Sigma$ (equivalently, $f(\Sigma)$ is contained in a slab $\Omega(t_1,t_2)$ on which $\rho'$ does not vanish).Then
\begin{itemize}
\item[(i)] if $\rho'(h)>0$ on $\Sigma$ (equivalently, $\rho'>0$ on $(t_1,t_2)$), then there exists an elliptic point on $\Sigma$ with respect to its future-pointing Gauss map.
\item[(ii)] if $\rho'(h)<0$ on $\Sigma$ (equivalently, $\rho'<0$ on $(t_1,t_2)$), then there exists an elliptic point on $\Sigma$ with respect to its past-pointing Gauss map.
\end{itemize}
\end{lemma}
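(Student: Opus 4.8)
The plan is to produce the elliptic point as a critical point of the height function $h=\pi_I\circ f$, reading off the sign of the shape operator there from the second derivative test. Since $\Sigma$ is compact and $h$ is continuous, $h$ attains a maximum at some $p_0$ and a minimum at some $p_1$. At any critical point $\nabla h=0$; because $\nabla h=-T^\top$, the tangential part of $T=\partial/\partial t$, this forces $T^\top=0$, i.e. $T$ is normal there. As $N$ is future-pointing we have $\Theta=\pair{N,T}\le-1$, and $T^\top=0$ gives $T=-\Theta N$ with $\Theta^2=1$; hence $\Theta=-1$ and $N=T$ at every critical point of $h$.

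The computational core is a formula for $\hess h$ in terms of the warping function and the shape operator $A$. Exploiting that $K=\rho(t)\,\partial/\partial t$ is closed conformal, so that $\ol\nabla_X K=\rho'(t)X$ and therefore $\ol\nabla_X T=(\rho'(h)/\rho(h))(X+\pair{X,T}T)$, I would differentiate $\nabla h=-T^\top=-(T+\Theta N)$ along a tangent field $X$ and project onto $T\Sigma$. Using $\pair{X,T}=-X(h)$ and the Weingarten relation $\ol\nabla_X N=-AX$, this yields, for all $X,Y\in T\Sigma$,
\[
\pair{\hess h(X),Y}=-\frac{\rho'(h)}{\rho(h)}\big(\pair{X,Y}+X(h)Y(h)\big)+\Theta\pair{AX,Y}.
\]
At a critical point, where $X(h)=0$ and $\Theta=-1$, this reduces to $\pair{\hess h(X),Y}=-(\rho'(h)/\rho(h))\pair{X,Y}-\pair{AX,Y}$.

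The sign of the Hessian at an extremum now decides everything. At the minimum $p_1$ one has $\hess h\ge0$, whence $\pair{AX,X}\le-(\rho'(h)/\rho(h))\pair{X,X}$; at the maximum $p_0$ one has $\hess h\le0$, whence $\pair{AX,X}\ge-(\rho'(h)/\rho(h))\pair{X,X}$. Since $\rho>0$ and the induced metric is positive definite, these become strict definiteness statements once $\rho'$ has a fixed sign. In case (i), $\rho'>0$: at $p_1$ we get $\pair{AX,X}<0$ for every $X\ne0$, so $A$ is negative definite and $p_1$ is an elliptic point for the future-pointing Gauss map. In case (ii), $\rho'<0$: at $p_0$ we get $\pair{AX,X}>0$, so $A$ is positive definite for $N$; replacing $N$ by the past-pointing map $\tilde N=-N$ turns $A$ into $-A$, which is negative definite, so $p_0$ is an elliptic point for the past-pointing orientation.

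I expect the main obstacle to be the Hessian identity together with the careful bookkeeping of orientations and sign conventions, namely the sign of $\Theta$, the relation $\nabla h=-T^\top$, and the chosen convention $A=-\ol\nabla_{\,\cdot}\,N$ which fixes the meaning of \emph{all principal curvatures negative}. One must keep these consistent so that the definiteness of $A$ is verified for exactly the orientation asserted in each case; once the identity is established, compactness and the first/second derivative tests make the existence of the elliptic point immediate.
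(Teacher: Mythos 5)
Your argument is correct. Note that the paper does not prove this lemma at all: it imports it verbatim from \cite{aliascolares} (Lemma 5.3 there, itself a consequence of a more general result in \cite{aliasbrasilcolares}), so there is no in-paper proof to compare against. Your reconstruction is the standard one and is internally consistent with the paper's conventions: the identity $\pair{\hess h(X),Y}=-\frac{\rho'(h)}{\rho(h)}\left(\pair{X,Y}+X(h)Y(h)\right)+\Theta\pair{AX,Y}$ is exactly what the computation in the proof of Proposition \ref{propsigmalor} gives (with $AX=-\ol{\nabla}_XN$), the identification $\Theta=-1$ and $N=T$ at critical points follows from $\norm{\nabla h}^2=\Theta^2-1$, and the second-derivative test at the minimum (case (i)) and maximum (case (ii)) of $h$ yields the asserted definiteness of the shape operator for the stated orientation in each case.
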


For the proof of our main results we will make use of the following computations (see Section 4 in \cite{aliascolares}).
\begin{prop}\label{propsigmalor}
Let $f: \Sigma^n \ra M^{n+1}$ be a spacelike hypersurface. If
\begin{displaymath}
\sigma(t)=\int_{t_0}^t \rho(r) \di r,
\end{displaymath}
then
\begin{align}
L_k h=&-(\log \rho)'(h)(c_kH_k+\pair{P_k \nabla h,\nabla h})-\Theta c_k H_{k+1},\label{lrhlor}\\
L_k \sigma(h)=&-c_k(\rho'(h)H_k+\Theta\rho(h) H_{k+1}).\label{lrsigmalor}
\end{align}
\end{prop}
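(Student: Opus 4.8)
The plan is to restrict suitable functions on $M$ to $\Sigma$ and to relate their hypersurface Hessians to ambient data through the warped-product geometry. Write $h=\pi_I\circ f$ for the height function and recall that $T=\partial/\partial t$ satisfies $\overline{\nabla}t=-T$, so that $\nabla h=-T^{\top}$, where $T^{\top}=T+\Theta N$ is the component of $T$ tangent to $\Sigma$; in particular $\pair{T,X}=-\pair{\nabla h,X}$ for every $X\in T\Sigma$. Since $\sigma'=\rho$, the chain rule gives $\nabla\sigma(h)=\rho(h)\nabla h$, and at the level of the operators $L_k$ one has, for any $C^{2}$ function $\phi$, the identity $L_k(\phi(h))=\phi'(h)L_kh+\phi''(h)\pair{P_k\nabla h,\nabla h}$. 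Applying this with $\phi=\sigma$ (so $\phi'=\rho$, $\phi''=\rho'$) will let me deduce \eqref{lrsigmalor} from \eqref{lrhlor} purely algebraically, once the latter is established; thus the real work is to compute $L_kh$.

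First I would record the general comparison between the hypersurface and the ambient Hessians. For $u\in C^{\infty}(M)$ and $g=u|_{\Sigma}$, decomposing $\overline{\nabla}u$ into tangential and normal parts and using the Weingarten formula $\overline{\nabla}_XN=-AX$ (with the paper's convention that the $k_i$ are the eigenvalues of $A$) yields
\[
\hess g(X,Y)=\overline{\hess}\,u(X,Y)-\pair{\overline{\nabla}u,N}\pair{AX,Y},\qquad X,Y\in T\Sigma,
\]
where $\overline{\hess}$ and $\overline{\nabla}$ denote the Hessian and gradient in $M$. This reduces everything to computing the ambient Hessian of $t$.

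Next I would exploit that $\mathcal{K}=\rho(t)T$ is a closed conformal field, i.e. $\overline{\nabla}_X\mathcal{K}=\rho'(t)X$ for all $X$. Expanding $\overline{\nabla}_X\mathcal{K}=X(\rho)T+\rho\,\overline{\nabla}_XT$ and using $X(\rho)=-\rho'\pair{T,X}$ gives
\[
\overline{\nabla}_XT=(\log\rho)'\big(X+\pair{T,X}T\big),
\]
hence $\overline{\hess}\,t(X,Y)=-\pair{\overline{\nabla}_XT,Y}=-(\log\rho)'\big(\pair{X,Y}+\pair{\nabla h,X}\pair{\nabla h,Y}\big)$ after substituting $\pair{T,X}=-\pair{\nabla h,X}$. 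Since $\pair{\overline{\nabla}t,N}=-\Theta$, the comparison formula gives
\[
\hess h(X,Y)=-(\log\rho)'\big(\pair{X,Y}+\pair{\nabla h,X}\pair{\nabla h,Y}\big)+\Theta\pair{AX,Y}.
\]
Taking the trace against $P_k$ in an orthonormal frame and using $\mathrm{Tr}(P_k)=c_kH_k$, $\mathrm{Tr}(AP_k)=-c_kH_{k+1}$, together with $\sum_i\pair{\nabla h,P_ke_i}\pair{\nabla h,e_i}=\pair{P_k\nabla h,\nabla h}$, produces exactly \eqref{lrhlor}; the chain-rule identity from the first paragraph then yields \eqref{lrsigmalor}.

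The main point requiring care is the bookkeeping of signs in the Lorentzian setting: the normal is timelike, $\pair{N,N}=-1$ and $\Theta\le-1$, so the projection constants and the Weingarten sign differ from the Riemannian case, and it is here that the factors $\Theta$ and the sign in $\mathrm{Tr}(AP_k)=-c_kH_{k+1}$ must be tracked consistently. The genuinely geometric input, by contrast, is light: only the closed conformal property $\overline{\nabla}_X\mathcal{K}=\rho'X$ and the trace identities (a) and (b) are needed, so once the Hessian comparison and the formula for $\overline{\nabla}_XT$ are in place the result follows by taking traces.
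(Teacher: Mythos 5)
Your proof is correct and uses essentially the same ingredients as the paper's: the closed conformal property $\overline{\nabla}_X(\rho(t)T)=\rho'(t)X$, the Weingarten formula, and the trace identities $\mathrm{Tr}(P_k)=c_kH_k$, $\mathrm{Tr}(AP_k)=-c_kH_{k+1}$. The only difference is organizational: the paper computes $\hess\sigma(h)$ directly and obtains \eqref{lrsigmalor} first, then deduces \eqref{lrhlor}, whereas you compute $\hess h$ via the ambient/induced Hessian comparison and pass to \eqref{lrsigmalor} by the chain rule for $L_k$ --- the two orders are equivalent and your sign bookkeeping is consistent with the paper's conventions.
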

\begin{proof}
The gradient of $\pi_{\erre} \in C^{\infty}(M)$ is $\overline{\nabla}\pi_{\erre}=-T$, hence:
$$
\nabla h=(\overline{\nabla}\pi_{\erre})^T=-T-\Theta N.
$$
Moreover
$$
\nabla \sigma(h)=\rho(h)\nabla h=-\rho(h)T-\rho(h)\Theta N.
$$
Since $\rho(h)T$ is a non-vanishing closed conformal vector field on $M^{n+1}$ we have
$$
\overline{\nabla}_Z (\rho(t)T)=\rho'(t)Z,
$$
for every vector $Z$ tangent to $M^{n+1}$, where $\overline{\nabla}$ denotes the Levi-Civita connection of $M^{n+1}$.
Hence
$$
\overline{\nabla}_X\sigma(h)=-\rho'(h)X+\rho(h)\Theta AX-X(\rho(h)\Theta)N
$$
and
$$
\nabla_X \sigma(h)=(\overline{\nabla}_X\sigma(h))^T=-\rho'(h)X+\rho(h)\Theta AX.
$$
Then
\begin{align*}
L_k \sigma(h)=&\mathrm{Tr}(P_k \circ \hess(\sigma(h)))\\
=&-c_k\rho(h)\mathrm{Tr}(P_k)+\rho(h)\Theta\mathrm{Tr}(P_kA)\\
&-c_k(\rho'(h)H_k+\rho(h)\Theta H_{k+1}).
\end{align*}
Moreover
$$
\nabla_X \nabla h=-\frac{\rho'(h)}{\rho(h)}\pair{X,\nabla h}\nabla h+\frac{1}{\rho(h)}\nabla_X \sigma(h)$$
and therefore
\begin{align*}
L_k h=&-(\log \rho(h))'\pair{P_k \nabla h,\nabla h}+\frac{1}{\rho(h)}L_k\sigma(h)\\
=&-(\log \rho(h))'(\pair{P_k \nabla h,\nabla h}+c_kH_k)-c_k\Theta H_{k+1}.
\end{align*}
\end{proof}

\section{Uniqueness of compact spacelike hypersurfaces}\label{compact}
In \cite[Theorem 7]{montiel2} (see also \cite[Theorem 1]{aliasmontiel}) it was proved that the only compact spacelike hypersurfaces with constant mean curvature in a spatially closed generalized Robertson-Walker spacetime $-I \times_{\rho} \p^n$ whose warping function satisfies $(\log\rho)''\leq 0$ are the spacelike slices. Recall that a spacetime obeys the timelike convergence condition (TCC) if its Ricci curvature is nonnegative on timelike directions. It is not difficult to see that a generalized Robertson-Walker spacetime $-I \times_{\rho} \p^n$ obeys TCC if and only if
\begin{equation}\label{TCC1}
\ricc_\p\geq (n-1)\sup_I((\log\rho)''\rho^2)\pair{,}_\p,
\end{equation}
and
\begin{equation}\label{TCC2}
\rho''\leq0,
\end{equation}
where $\ricc_\p$ and $\pair{,}_\p$ are respectively the Ricci and metric tensors of the Riemannian manifold $\p$.

As observed in \cite{montiel2}, any of the two conditions above implies separately that spacelike slices are the only compact spacelike hypersurfaces in $-I \times_{\rho} \p^n$. In particular, the sole hypothesis \eqref{TCC2} suffices to guarantee uniqueness, without any other restriction on the curvature of $\p$. Even more, the more general condition $(\log\rho)''\leq0$ is sufficient to obtain the uniqueness. On the other hand, this uniqueness result was extended in \cite{aliascolares} to the case of compact spacelike hypersurfaces with $H_k$ constant under the assumption that the ambient generalized Robertson-Walker spacetime obeys the null convergence condition, which is nothing but \eqref{TCC1}, and that $\rho'$ does not vanish on the hypersurface. Related to this, here we obtain the following result.

\begin{theorem}\label{thmh2constcomplor}
Let $-I \times_{\rho} \p^n$ be a spatially closed generalized Robertson-Walker spacetime with warping function satisfying
$(\log \rho)''\leq 0$. The only compact spacelike hypersurfaces with $H_k$ constant, $2\leq k\leq n$, contained in a slab
$\Omega(t_1,t_2)$ on which $\rho'$ does not vanish are slices.
\end{theorem}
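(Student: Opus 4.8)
The plan is to feed the pointwise identities of Proposition \ref{propsigmalor}, evaluated at the extrema of the height function $h$, into an argument that squeezes $(\log\rho)'$ against the constant $H_k^{1/k}$ using the concavity hypothesis. First I would normalize: since $\rho'$ does not vanish on the slab I may assume $\rho'>0$ on $(t_1,t_2)$, the case $\rho'<0$ being identical after replacing $N$ by the past-pointing Gauss map and using Lemma \ref{lemma5.3}(ii) in place of (i). By Lemma \ref{lemma5.3}(i) there is an elliptic point, where every $H_j>0$; as $H_k$ is constant it is a \emph{positive} constant, so Lemma \ref{lemmaelliptl1lor} (if $k=2$) or Lemma \ref{lemmaelliptlrlor} (if $k\ge3$) makes $L_{k-1}$ elliptic, i.e. $P_{k-1}$ positive definite. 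Thus $\Sigma$ lies in the positive G\aa rding cone, where $H_1,\dots,H_k>0$ and the Newton--Maclaurin inequality $H_{k-1}^{1/(k-1)}\ge H_k^{1/k}$ holds pointwise.

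Next I would evaluate \eqref{lrsigmalor} at the minimum $q$ and the maximum $p$ of $h$. At a critical point $\nabla h=0$, hence $\Theta=-1$, and the computation in the proof of Proposition \ref{propsigmalor} gives $\hess h=-(\log\rho)'(h)I-A$. At the minimum $\hess h\ge0$ forces every principal curvature $\le-(\log\rho)'(h_{\min})<0$, so $q$ is itself elliptic and Maclaurin yields $(\log\rho)'(h_{\min})\le H_k^{1/k}$. At the maximum $\hess h\le0$ gives $-k_i\le(\log\rho)'(h_{\max})$ for all $i$; by convexity of the G\aa rding cone and the monotonicity of $\sigma_{k-1}$ (whose cone derivatives $\sigma_{k-2}$ of the remaining variables are positive) along the segment joining the curvature vector of $\Sigma$ at $p$ to that of the umbilical slice, one gets $H_{k-1}(p)\le((\log\rho)'(h_{\max}))^{k-1}$. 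Inserting this into \eqref{lrsigmalor} at $p$, where $L_{k-1}\sigma(h)\le0$ by ellipticity, gives $H_k\le(\log\rho)'(h_{\max})H_{k-1}(p)\le((\log\rho)'(h_{\max}))^{k}$, so $(\log\rho)'(h_{\max})\ge H_k^{1/k}$. Since $h_{\min}\le h_{\max}$ and $(\log\rho)'$ is non-increasing,
\[
H_k^{1/k}\le(\log\rho)'(h_{\max})\le(\log\rho)'(h_{\min})\le H_k^{1/k},
\]
so every inequality is an equality: $(\log\rho)'\equiv H_k^{1/k}$ on the range of $h$, and $p,q$ are umbilical.

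The endgame splits according to $(\log\rho)''$. If $(\log\rho)''\not\equiv0$ on $[h_{\min},h_{\max}]$ the middle inequality above is strict unless $h_{\min}=h_{\max}$, whence $\Sigma$ is a slice; this is exactly the mechanism that later drives the noncompact Theorem \ref{thmGRW2}, where the ``equality only at isolated points'' hypothesis forbids the degenerate situation. I expect the genuinely degenerate case $(\log\rho)''\equiv0$ on the whole range, which \emph{is} permitted here, to be the main obstacle, and the point where compactness is indispensable. The natural tool is the $k$-th Minkowski integral formula, the integrated form $\int_\Sigma(\rho'(h)H_{k-1}+\Theta\rho(h)H_k)\,dV=0$ of \eqref{lrsigmalor}, which with $(\log\rho)'\equiv c:=H_k^{1/k}$ becomes $\int_\Sigma\rho(h)\bigl(H_{k-1}+\Theta c^{\,k-1}\bigr)\,dV=0$. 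The difficulty is that the integrand is not pointwise sign-definite (unlike the case $k=1$, where $H_{k-1}+\Theta c^{k-1}=1+\Theta\le0$ makes $\sigma(h)$ subharmonic and the conclusion immediate), so one must exploit the equality cases of the Newton--Maclaurin inequalities, which characterize umbilic points, together with the first Minkowski formula and the inequality $H_k\le H_1H_{k-1}$, to force $\Theta\equiv-1$; total umbilicity then identifies $\Sigma$ with a slice. Pinning down this rigidity in the flat-$(\log\rho)''$ regime is the step I anticipate to be the hardest.
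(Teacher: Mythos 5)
Your argument up to the squeeze $(\log\rho)'(h)\equiv H_k^{1/k}$ on $[h_*,h^*]$ is essentially sound (the paper reaches the same intermediate conclusion, though by evaluating a single combined operator at the two extrema of $h$ rather than via your Hessian comparison at the critical points plus the G\aa rding monotonicity estimate $H_{k-1}(p)\le((\log\rho)'(h^*))^{k-1}$). The genuine gap is exactly where you flag it: the case $(\log\rho)''\equiv0$ on the whole range of $h$ (e.g.\ $\rho(t)=e^{\lambda t}$) is permitted by the hypothesis and is not resolved by your proposal. The route you sketch does stall for the reason you give: with $(\log\rho)'\equiv c=H_k^{1/k}$ the Minkowski integrand $\rho(h)(H_{k-1}+\Theta c^{k-1})$ has no pointwise sign, since Maclaurin gives $H_{k-1}\ge c^{k-1}$ while $\Theta\le-1$ pushes the other way; the same indeterminacy prevents you from applying the maximum principle to $L_{k-1}\sigma(h)$ alone. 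So the proof is incomplete precisely in the regime where compactness has to do real work.

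The paper's resolution of this is the one new idea you are missing: instead of $L_{k-1}$, it uses the $\Theta$-weighted combination $\mathcal{L}=\sum_{i=0}^{k-1}\frac{c_{k-1}}{c_i}\,((\log\rho)'(h))^{k-1-i}(-\Theta)^i L_i$, which is elliptic because $\rho'(h)>0$, $-\Theta\ge1$ and the $P_i$ are positive definite, and for which an induction on \eqref{lrsigmalor} yields $\mathcal{L}\sigma(h)=-c_{k-1}\rho(h)\bigl((\log\rho)'(h)^{k}-(-\Theta)^{k}H_{k}\bigr)$. The weights $(-\Theta)^i$ are the whole point: once $(\log\rho)'(h)\equiv H_k^{1/k}$ this identity reads $\mathcal{L}\sigma(h)=-c_{k-1}\rho(h)H_k\bigl(1-(-\Theta)^k\bigr)\ge0$ because $(-\Theta)^k\ge1$, and the strong maximum principle for the elliptic operator $\mathcal{L}$ on the compact $\Sigma$ forces $\sigma(h)$, hence $h$, to be constant --- with no case distinction on $(\log\rho)''$ and no rigidity analysis of equality in Newton--Maclaurin. (The same operator evaluated at $p_{max}$ and $p_{min}$, where $\Theta=-1$, already gives the squeeze $(\log\rho)'(h_*)^k\le H_k\le(\log\rho)'(h^*)^k$, so it replaces your extremum argument as well.) To repair your proof, substitute this operator for $L_{k-1}$ in your endgame; the rest of your outline then goes through.
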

\begin{proof}
We may assume that $\rho'>0$ on $(t_1,t_2)$, so that $\rho'(h)>0$ on $\Sigma$. By Lemma \ref{lemma5.3} there exists an elliptic
point $p_0\in\Sigma$ with respect to the future-pointing Gauss map $N$. In particular, $H_k=H_k(p_0)$ is a positive
constant, $\Theta\leq -1$ on $\Sigma$, and by Lemma \ref{lemmaelliptlrlor} the operators $P_j$ are positive definite for all
$1\leq j\leq k-1$.

To make more transparent our reasoning, we first consider the case  $k=2$. We introduce the operator
$\mathcal{L}$ defined as
\[
\mathcal{L}=(n-1)(\log \rho)'(h)\Delta-\Theta L_1=\rm{Tr}(\mathcal{P}\circ\hess),
\]
where
\begin{displaymath}
\mathcal{P}=(n-1)(\log \rho)'(h)I-\Theta P_1.
\end{displaymath}
Since $\rho'(h)>0$, $-\Theta\geq 1>0$, and $P_1$ is positive definite, the operator $\mathcal{L}$ is elliptic. Moreover,
observe that
\[
\mathcal{L}(u)=\mathrm{div}(\mathcal{P}\nabla u)-\pair{\mathrm{div}\mathcal{P},\nabla u},
\]
where $\mathrm{div}\mathcal{P}=\mathrm{Tr}\nabla\mathcal{P}$. Hence by Theorem 3.1 in \cite{GT} $\mathcal{L}$ satisfies
the maximum principle.

It follows by Equation \eqref{lrsigmalor} that
\begin{equation}
\label{mathcall}
\mathcal{L}\sigma(h)=-c_1\rho(h)((\log\rho)'(h)^2-\Theta^2H_2).
\end{equation}
Compactness of $\Sigma$ implies the existence of points $p_{min}, p_{max}\in\Sigma$ such that
\begin{displaymath}
h(p_{min})=\min_{\Sigma}h=h_*,\qquad h(p_{max})=\max_{\Sigma}h=h^*.
\end{displaymath}
Hence $\nabla h(p_{min})=\nabla h(p_{max})=0$ and $\Theta(p_{min})=\Theta(p_{max})=-1$. Furthermore, since $\sigma$ is an increasing function,
$p_{min}$ and $p_{max}$ realize also the minimum and the maximum of $\sigma(h)$ on $\Sigma$, respectively.
Therefore, since $\mathcal{L}$ is an elliptic operator we have
\begin{align*}
\mathcal{L}\sigma(h)(p_{max})=-n(n-1)\rho(h^*)((\log\rho)'(h^*)^2-H_2)\leq 0,\\
\mathcal{L}\sigma(h)(p_{min})=-n(n-1)\rho(h_*)((\log\rho)'(h_*)^2-H_2)\geq 0.
\end{align*}
Hence
\begin{displaymath}
(\log\rho)'(h_*)^2\leq H_2\leq (\log\rho)'(h^*)^2,
\end{displaymath}
from which it follows that $(\log\rho)'(h)=H_2^{1/2}=\mathrm{constant}$, $(\log \rho)'$ being a positive non-increasing
function. Therefore, we have
\[
\mathcal{L}\sigma(h)=-n(n-1)\rho(h)((\log\rho)'(h)^2-\Theta^2H_2)=
-n(n-1)\rho(h)H_2(1-\Theta^2)\geq 0
\]
on $\Sigma$, and by compactness and the maximum principle we conclude that $\sigma(h)$, and hence $h$, is constant.

Let us consider now the case $k\geq 3$. Let $\mathcal{L}$ be the operator given by
\[
\mathcal{L}=\sum_{i=0}^{k-1}\frac{c_{k-1}}{c_i}(\log \rho)'(h)^{k-1-i}(-\Theta)^iL_i=\mathrm{Tr}(\mathcal{P}\circ \hess),
\]
where
\[
\mathcal{P}=\sum_{i=0}^{k-1}\frac{c_{k-1}}{c_i}(\log \rho)'(h)^{k-1-i}(-\Theta)^iP_i.
\]
Since $\rho'(h)>0$, $-\Theta\geq 1>0$, and $P_1,\ldots, P_{k-1}$ are all positive definite, the operator
$\mathcal{L}$ is elliptic and it satisfies the maximum principle.

We claim that
\begin{equation}
\label{mathcallrcompact}
\mathcal{L}\sigma(h)=-c_{k-1}\rho(h)\left((\log\rho)'(h)^{k}-(-\Theta)^{k}H_{k}\right).
\end{equation}
We can prove the claim by induction. We have already proved that this is true for $k=2$. Assuming that it is true for
$k-2$ and using \eqref{lrsigmalor} we get
\begin{align*}
\mathcal{L}\sigma(h)=&\frac{c_{k-1}}{c_{k-2}}(\log \rho)'(h)
\sum_{i=0}^{k-2}\frac{c_{k-2}}{c_i}(\log\rho)'(h)^{k-2-i}(-\Theta)^iL_i\sigma(h)\\
{}&+(-\Theta)^{k-1}L_{k-1}\sigma(h)\\
=&-c_{k-1}\rho(h)(\log\rho)'(h)^{k}+c_{k-1}\rho(h)(\log\rho)'(h)(-\Theta)^{k-1}H_{k-1}\\
{}&-c_{k-1}\rho(h)\left((\log\rho)'(h)(-\Theta)^{k-1}H_{k-1}-(-\Theta)^{k}H_{k}\right)\\
=& -c_{k-1}\rho(h)\left((\log\rho)'(h)^{k}-(-\Theta)^{k}H_{k}\right).
\end{align*}
The rest of the proof follows as in the case $k=2$, using \eqref{mathcallrcompact} instead of \eqref{mathcall}.

\end{proof}

As an application of Theorem \ref{thmh2constcomplor} we have the following
\begin{corollary}\label{coro6}
Let $-I \times_{\rho} \p^n$ be a spatially closed generalized Robertson-Walker spacetime with warping function satisfying
$(\log \rho)''\leq 0$. The only compact spacelike hypersurfaces with nonvanishing mean curvature and $H_k$ constant, $2\leq k\leq n$, are slices.
\end{corollary}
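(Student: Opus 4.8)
The plan is to reduce the Corollary to Theorem \ref{thmh2constcomplor} by showing that the nonvanishing of the mean curvature alone already forces $f(\Sigma)$ into a slab on which $\rho'$ does not vanish. Since $\Sigma$ is compact and connected and $H=H_1$ never vanishes, $H$ has a constant sign on $\Sigma$, and I will treat the cases $H>0$ and $H<0$ separately but symmetrically.

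First I would record the expression for the Hessian of $\sigma(h)$ already obtained in the proof of Proposition \ref{propsigmalor}, namely $\hess\,\sigma(h)(X)=-\rho'(h)X+\rho(h)\Theta AX$. Compactness yields points $p_{max},p_{min}\in\Sigma$ realizing $h^*=\max_\Sigma h$ and $h_*=\min_\Sigma h$; at both of them $\nabla h=0$, hence $\Theta=-1$, and, since $\sigma$ is increasing, $\sigma(h)$ attains there its maximum and minimum respectively. Evaluating the Hessian at $p_{max}$ and using that $\hess\,\sigma(h)\leq 0$ there gives the operator inequality $A\geq-(\log\rho)'(h^*)I$, whence, taking traces, $H(p_{max})\leq(\log\rho)'(h^*)$. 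Symmetrically, positivity of $\hess\,\sigma(h)$ at $p_{min}$ yields $A\leq-(\log\rho)'(h_*)I$ and therefore $H(p_{min})\geq(\log\rho)'(h_*)$.

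The crucial step is then to propagate a sign of $(\log\rho)'$ from a single extremal value to the whole interval $[h_*,h^*]$, and this is exactly what the hypothesis $(\log\rho)''\leq 0$ provides, since it makes $(\log\rho)'$ non-increasing. If $H>0$, the bound at $p_{max}$ gives $(\log\rho)'(h^*)\geq H(p_{max})>0$, and monotonicity forces $(\log\rho)'(t)\geq(\log\rho)'(h^*)>0$ for every $t\leq h^*$, in particular throughout $[h_*,h^*]$; hence $\rho'(h)>0$ on $\Sigma$. If $H<0$, the bound at $p_{min}$ gives $(\log\rho)'(h_*)\leq H(p_{min})<0$, and monotonicity forces $(\log\rho)'(t)<0$ for every $t\geq h_*$, so $\rho'(h)<0$ on $\Sigma$. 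In either case $\rho'(h)$ does not vanish on $\Sigma$, i.e.\ $f(\Sigma)$ lies in a slab on which $\rho'$ does not vanish.

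With this in hand the Corollary is immediate: the hypotheses $2\leq k\leq n$ and $H_k$ constant are unchanged, and we have just verified the remaining hypothesis of Theorem \ref{thmh2constcomplor}, so $\Sigma$ is a slice. I expect the only delicate point to be the two curvature comparisons at the extremal points, where one must keep track of the direction of the operator inequalities and of the sign $\Theta=-1$; everything else is the observation that $(\log\rho)''\leq 0$ is precisely what upgrades a sign of $(\log\rho)'$ at one endpoint into a sign on the entire slab, followed by a direct appeal to the theorem.
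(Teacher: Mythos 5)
Your proposal is correct and follows essentially the same route as the paper: a second-derivative test for the height function at its extremal points (you phrase it via $\hess\,\sigma(h)$ and an operator inequality for $A$, the paper via the sign of $\Delta h$ from Proposition \ref{propsigmalor}, which is just the trace of your inequality), followed by the concavity of $\log\rho$ to propagate the sign of $\rho'$ across the slab and an appeal to Theorem \ref{thmh2constcomplor}. Your case split on the sign of $H$ with the future-pointing Gauss map fixed is the orientation-flipped version of the paper's split on the sign of $\Theta$ after normalizing $H_1>0$, so the two arguments coincide.
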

\begin{proof}
We may choose the orientation so that $H_1>0$. In this case, since $\Theta$ never vanishes, it can be either positive or negative with respect to this orientation. \\
Let us assume first that $\Theta<0$. Since $\Sigma$ is compact there exist points $p_{max}$ and $p_{min}$ at which the height function $h$ attains its maximum and minimum values respectively. In particular, $\nabla h(p_{max})=0$ and $\Theta(p_{max})=-1$. Moreover, setting $h^*:=h(p_{max})$ and using Proposition \ref{propsigmalor}
$$
0\geq\Delta h(p_{max})=-n(\log\rho)'(h^*)+nH_1(p_{max})> -n(\log\rho)'(h^*).$$
Observing now that $-\log\rho$ is a convex function we get
$$
(\log\rho)'(h)\geq(\log\rho)'(h^*)> 0
$$
and hence $\rho'(h)>0$.\\
On the other hand, let us consider the case of $\Theta>0$ with respect to the chosen orientation. As already said, we can find a point $p_{min}$ were the height function attains its minimum. In this case $\nabla h(p_{min})=0$, $\Theta(p_{min})=1$ and
$$
0\leq\Delta h(p_{min})=-n(\log\rho)'(h_*)-nH_1(p_{min})< -n(\log\rho)'(h_*),$$
where $h_*=h(p_{min})$. Reasoning as above we can see that $\rho'(h)<0$.\\
In any case, we conclude that $\Sigma$ is contained in a slab on which $\rho'$ does not vanish and the result follows from Theorem \ref{thmh2constcomplor}.
\end{proof}
\begin{corollary}\label{coro7}
Let $-I \times_{\rho} \p^n$ be a spatially closed  generalized Robertson-Walker spacetime whose warping function satisfies
$(\log \rho)''\leq 0$.
The only compact spacelike hypersurfaces satisfying either
\begin{itemize}
\item[(i)] $H_2$ is a positive constant, or
\item[(ii)] $H_k$ is constant (with $k\geq 3$) and there exists an elliptic
point in $\Sigma$,
\end{itemize}
are slices.
\end{corollary}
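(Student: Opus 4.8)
The plan is to reduce both cases to the situation already settled in Corollary \ref{coro6}: under either hypothesis I would show that the mean curvature $H_1$ cannot vanish anywhere on $\Sigma$, and then the conclusion is immediate, since Corollary \ref{coro6} asserts that a compact spacelike hypersurface with nonvanishing mean curvature and $H_k$ constant ($2\leq k\leq n$) must be a slice. Thus the entire task is to verify the nonvanishing of $H_1$ in each of the two cases.

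For case (i), I would invoke the classical Newton inequality relating $H_1$ and $H_2$. Writing $p_j=S_j/{n\choose j}$ for the normalized elementary symmetric functions of the principal curvatures, one has $p_1^2\geq p_0 p_2$; since $p_0=1$, $p_1=-H_1$ and $p_2=H_2$ (because ${n\choose 2}H_2=(-1)^2S_2=S_2$), this reads $H_1^2\geq H_2$. As $H_2$ is a positive constant, it follows that $|H_1|\geq H_2^{1/2}>0$ at every point, so $H_1$ is nowhere zero, and Corollary \ref{coro6}, applied with $k=2$, gives that $\Sigma$ is a slice. For case (ii), I would argue through the elliptic point $p_0$. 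There all principal curvatures are negative for the appropriate orientation, so each $S_j$ has sign $(-1)^j$ at $p_0$, whence ${n\choose j}H_j=(-1)^jS_j>0$; in particular $H_k(p_0)>0$, and since $H_k$ is constant, $H_k>0$ on all of $\Sigma$. By Lemma \ref{lemmaelliptlrlor} the operators $L_1,\ldots,L_{k-1}$ are then elliptic, i.e. $P_1,\ldots,P_{k-1}$ are positive definite. Using property (a), positive definiteness of $P_1$ gives $\mathrm{Tr}(P_1)=c_1H_1>0$ with $c_1=(n-1)n>0$, hence $H_1>0$ everywhere. Again $H_1$ is nowhere vanishing, and Corollary \ref{coro6} finishes the proof.

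The step requiring the most care is the passage, in case (ii), from the purely local data at the elliptic point to the global positivity of $H_1$; this hinges on correctly tracking the orientation and the sign conventions peculiar to spacelike hypersurfaces, where $H_1=-\tfrac1n\mathrm{Tr}(A)$ and the principal curvatures enter $H_j$ with the factor $(-1)^j$. Once these signs are handled, both reductions are short, because the substantive analytic work—the maximum–principle argument producing the slab and forcing $\sigma(h)$ to be constant—has already been carried out in Theorem \ref{thmh2constcomplor} and transferred to the nonvanishing–$H_1$ setting in Corollary \ref{coro6}.
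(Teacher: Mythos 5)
Your proof is correct and follows the same overall strategy as the paper: both cases are reduced to Corollary \ref{coro6} by showing that $H_1$ is nowhere zero, and your treatment of case (i) via $H_1^2\geq H_2>0$ is identical to the paper's. The only divergence is in case (ii): the paper deduces $H_1>0$ directly from the Garding inequalities $H_1\geq H_2^{1/2}\geq\cdots\geq H_k^{1/k}>0$, whereas you pass through Lemma \ref{lemmaelliptlrlor} to obtain positive definiteness of $P_1$ and then read off $H_1>0$ from the trace identity $\mathrm{Tr}(P_1)=c_1H_1$ with $c_1>0$. Both routes are valid and of comparable length; yours leans on the ellipticity lemma (whose proof in the cited literature itself rests on Garding-type arguments and connectedness), while the paper's is a pointwise algebraic inequality, so the difference is cosmetic rather than substantive.
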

For the proof of this corollary, observe that in case $(i)$, by the basic inequality $H_1^2\geq H_2>0$, it follows that the mean curvature does not vanish. On the other hand, in case $(ii)$, we assume that there exists a point $p_0\in\Sigma$ where all the principal curvatures are negative. Therefore, the constant $H_k=H_k(p_0)$ is positive and, using the Garding inequalities \cite{Ga} we get that
$$
H_1\geq H_2^{1/2}\geq\cdots\geq H_k^{1/k}>0
$$
on $\Sigma$. In particular, $H_1>0$ and the conclusion follows by Corollary \ref{coro6}.

\section{Uniqueness of complete spacelike hypersurfaces}\label{complete}

We will now extend the previous theorems to the complete noncompact case. To do that we will use a generalization of the Omori-Yau maximum principle for trace type differential operators.

Let $\Sigma$ be a Riemannian manifold and let $L=\mathrm{Tr}(P\circ\hess)$ be a semi-elliptic operator, where $P:T\Sigma\ra T\Sigma$ is a positive semi-definite symmetric tensor. Following the terminology introduced in \cite{pirise}, we say that the Omori-Yau maximum principle holds on $\Sigma$ for the operator $L$ if, for any function $u \in C^{2}(\Sigma)$ with
$u^*=\sup_{\Sigma}u < +\infty$, there exists a sequence $\left\{p_{j}\right\}_{j\in \enne} \subset \Sigma$ with the properties
\[
\mathrm{(i)}\   u(p_j)>u^*-\frac{1}{j},\ \mathrm{(ii)}\   \norm{\nabla u(p_j)}<\frac{1}{j}, \ \mathrm{(iii)}\   Lu(p_j)< \frac{1}{j}
\]
for every $j\in\enne$. Equivalently, for any function $u \in C^{2}(\Sigma)$ with
$u_*=\inf_{\Sigma}u > -\infty$, there exists a sequence $\left\{p_{j}\right\}_{j\in \enne} \subset \Sigma$ with the properties
\[
\mathrm{(i)}\   u(p_j)<u_*+\frac{1}{j},\ \mathrm{(ii)}\   \norm{\nabla u(p_j)}<\frac{1}{j}, \ \mathrm{(iii)}\   Lu(p_j)> -\frac{1}{j}
\]
for every $j\in\enne$.

In \cite[Theorem 1]{aliasimperarigoli} the authors have recently proved the following version of a generalized Omori-Yau maximum principle for trace type
differential operators.
\begin{theorem}[Theorem 1 in \cite{aliasimperarigoli}]
\label{maxprinc}
Let $(\Sigma,\pair{,})$ be a Riemannian manifold, and let $L=\mathrm{Tr}(P \circ \hess)$ be a semi-elliptic operator, where
$P:T\Sigma\rightarrow T\Sigma$ is a positive semi-definite symmetric tensor satisfying $\sup_\Sigma\mathrm{Tr}P<+\infty$.
Assume the existence of a non-negative $C^2$ function $\gamma$ with the properties
\begin{eqnarray}
\label{gamma1} &\gamma(p) \ra +\infty \qquad &\text{as } p \ra \infty,\\
\label{gamma2} &\exists A>0 \qquad &\text{such that }\norm{\nabla \gamma}\leq A\sqrt{\gamma}\qquad \text{off a compact set,}\\
\label{gamma3} &\exists B>0 \qquad &\text{such that } L \gamma \leq B\sqrt{\gamma\, G(\sqrt{\gamma})}\qquad \text{off a compact set,}
\end{eqnarray}
where $G$ is a smooth function on $[0,+\infty)$ such that:
\begin{equation}\label{condG}
\begin{array}{ll}
\mathrm{(i)}\  G(0)>0, & \mathrm{(ii)}\  G'(t)\geq 0 \qquad \text{on } [0,+\infty),\\
\mathrm{(iii)}\  1/\sqrt{G(t)}\not \in L^1(+\infty),& \mathrm{(iv)}\  \limsup_{t \ra \infty} \frac{t G(\sqrt{t})}{G(t)}<+\infty.
\end{array}
\end{equation}
Then, the Omori-Yau maximum principle holds on $\Sigma$ for the operator $L$.
\end{theorem}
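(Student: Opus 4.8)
The plan is to establish only the $\sup$-version of the conclusion; the $\inf$-version follows at once by applying it to $-u$, since $L(-u)=-Lu$ and $\inf_\Sigma u>-\infty$ is equivalent to $\sup_\Sigma(-u)<+\infty$. So I fix $u\in C^2(\Sigma)$ with $u^*=\sup_\Sigma u<+\infty$ and must produce a sequence realizing (i)--(iii). The idea is a perturbation argument based on the exhaustion $\gamma$. For each $\sigma>0$ I consider $h_\sigma=u-\sigma\varphi(\gamma)$, where $\varphi\colon[0,+\infty)\to\erre$ is a smooth increasing function, to be chosen with $\varphi(t)\to+\infty$. Because $\gamma(p)\to+\infty$ at infinity by \eqref{gamma1} and $u$ is bounded above, $h_\sigma\to-\infty$ off compact sets, so $h_\sigma$ attains a maximum at some $x_\sigma\in\Sigma$.

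At $x_\sigma$ the first and second order conditions give $\nabla u(x_\sigma)=\sigma\varphi'(\gamma)\nabla\gamma(x_\sigma)$ and $\hess u(x_\sigma)\le\sigma\hess(\varphi(\gamma))(x_\sigma)$ as symmetric endomorphisms. Applying $\mathrm{Tr}(P\circ\,\cdot\,)$ and using that $P$ is positive semi-definite --- so that $\mathrm{Tr}(PS)\le\mathrm{Tr}(PS')$ whenever $S\le S'$ --- I get $Lu(x_\sigma)\le\sigma L(\varphi(\gamma))(x_\sigma)$. Expanding, $L(\varphi(\gamma))=\varphi'(\gamma)L\gamma+\varphi''(\gamma)\pair{P\nabla\gamma,\nabla\gamma}$, and the whole argument hinges on choosing $\varphi$ so that the right-hand side is controlled.

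The choice I would make is $\varphi'(t)=(t\,G(\sqrt t))^{-1/2}$ for large $t$, smoothly modified near $0$. Monotonicity of $G$ (condition (ii)) makes $t\mapsto t\,G(\sqrt t)$ increasing, so $\varphi$ is concave; since $\pair{P\nabla\gamma,\nabla\gamma}\ge0$ the second-order term is nonpositive and may be discarded, and \eqref{gamma3} then gives $L(\varphi(\gamma))\le\varphi'(\gamma)B\sqrt{\gamma G(\sqrt\gamma)}=B$, whence $Lu(x_\sigma)\le\sigma B$. For the gradient, \eqref{gamma2} together with $G(0)>0$ (condition (i)) yields $\norm{\nabla u(x_\sigma)}=\sigma\varphi'(\gamma)\norm{\nabla\gamma}\le\sigma A/\sqrt{G(\sqrt\gamma)}\le\sigma A/\sqrt{G(0)}$. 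Properness of $\varphi$ is where condition (iii) enters: the substitution $s=\tau^2$ turns $\int_1^t\varphi'(s)\,ds$ into $2\int_1^{\sqrt t}d\tau/\sqrt{G(\tau)}$, which diverges precisely because $1/\sqrt G\notin L^1(+\infty)$, legitimizing the existence of $x_\sigma$. Finally, testing $h_\sigma(x_\sigma)\ge h_\sigma(y)$ against points $y$ where $u$ is close to $u^*$, and using that $\varphi$ is bounded below, shows $u(x_\sigma)\to u^*$ as $\sigma\to0$.

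To conclude I would take $\sigma=\sigma_j\downarrow0$ and set $p_j=x_{\sigma_j}$: the estimates give $\norm{\nabla u(p_j)}\to0$ and $Lu(p_j)\le\sigma_j B\to0$, together with $u(p_j)\to u^*$, so after relabelling they satisfy (i)--(iii). One point requiring care is that \eqref{gamma2}--\eqref{gamma3} hold only off a fixed compact set $K$: if infinitely many $x_{\sigma_j}$ lie in $K$, a convergent subsequence yields an interior point where $u$ attains $u^*$, at which $\nabla u=0$ and $Lu\le0$, so a constant sequence works; otherwise the estimates apply at $x_{\sigma_j}$ directly. The step I expect to be the main obstacle is precisely the design of $\varphi$ so that the gradient term, the $L$-term, and the properness all close off simultaneously and uniformly in $\sigma$; this is where the interplay of the four conditions on $G$ is consumed, with the ratio condition (iv) and the bound $\sup_\Sigma\mathrm{Tr}P<+\infty$ serving to keep the discarded second-order term $\varphi''(\gamma)\pair{P\nabla\gamma,\nabla\gamma}$ under control and the estimates uniform if one does not take $\varphi$ exactly concave.
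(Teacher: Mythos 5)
The paper states this result as Theorem 1 of \cite{aliasimperarigoli} and reproduces it without proof, so there is no in-paper argument to set yours against; the comparison can only be with the Pigola--Rigoli--Setti--type scheme of the cited source. Your proposal is correct and essentially complete. All the load-bearing steps are sound: the reduction to the $\sup$-version via $u\mapsto -u$; properness of $u-\sigma\varphi(\gamma)$, which needs exactly condition (iii) of \eqref{condG} after the substitution $s=\tau^2$; the first- and second-order tests at the maximum point combined with the monotonicity of $S\mapsto\mathrm{Tr}(PS)$ for $P\geq 0$; the one-line bound $\varphi'(\gamma)L\gamma\leq B$ coming from \eqref{gamma3} and your calibration of $\varphi'$; the gradient bound from \eqref{gamma2} together with $G\geq G(0)>0$; the limit $u(x_\sigma)\ra u^*$ from testing against near-supremum points; and the dichotomy handling the compact set where \eqref{gamma2}--\eqref{gamma3} may fail. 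The only detail to write out is the smoothing of $\varphi$ near $t=0$, which must keep $\varphi'$ positive, bounded and non-increasing (so that both the concavity and the gradient estimate survive) and must be accompanied by enlarging the exceptional compact set to absorb the region where $\varphi'$ is not given by $(t\,G(\sqrt t))^{-1/2}$. Where you genuinely diverge from the source is precisely that calibration: choosing $\varphi'(t)=(t\,G(\sqrt t))^{-1/2}$ makes the operator estimate close immediately, and, as you note, neither condition (iv) of \eqref{condG} nor $\sup_\Sigma\mathrm{Tr}P<+\infty$ is ever used. In the original scheme a less tailored perturbation is used and one must then control how fast $\gamma(x_\sigma)$ escapes to infinity as $\sigma\ra 0$, which is where those two hypotheses are consumed. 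Your version thus proves a formally stronger statement (these hypotheses are redundant for the conclusion as stated, consistently with later treatments of this principle); this is a gain, not a gap, but you should say explicitly that you are not using them so the reader does not search for a hidden appeal to (iv).
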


As a consequence of Theorem \ref{maxprinc}, we have the following result, which will be essential for the proof of our main results
(see also Corollary 3 in \cite{aliasimperarigoli} for a more general version in terms of a (generally non-constant) lower bound for the radial sectional curvature).
\begin{lemma}
\label{lemmaOY}
Let $(\Sigma,\pair{,})$ be a complete, noncompact Riemannian manifold with sectional curvature bounded from below. Then, the Omori-Yau maximum principle holds on
$\Sigma$ for any semi-elliptic operator $L=\mathrm{Tr}(P \circ \hess)$ with $\sup_\Sigma\mathrm{Tr}P<+\infty$.
\end{lemma}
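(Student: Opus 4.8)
The plan is to deduce the statement directly from Theorem \ref{maxprinc} by exhibiting an explicit function $\gamma$ built out of the Riemannian distance. Fix an origin $o\in\Sigma$, let $r(p)=\mathrm{dist}_\Sigma(o,p)$, and set $\gamma=r^2$. Since $\Sigma$ is complete and noncompact, $r$ is proper, so $\gamma(p)\ra+\infty$ as $p\ra\infty$, which is \eqref{gamma1}. Wherever $r$ is smooth one has $\norm{\nabla r}=1$, hence $\norm{\nabla\gamma}=2r=2\sqrt{\gamma}$, giving \eqref{gamma2} with $A=2$. The hypothesis $\sup_\Sigma\mathrm{Tr}P<+\infty$ is used only in estimating $L\gamma$, as follows.

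The core of the argument is the Hessian comparison theorem. Let $-\kappa^2$ (with $\kappa\geq 0$) be a lower bound for the sectional curvature of $\Sigma$. Away from the cut locus of $o$, comparison with the space form of curvature $-\kappa^2$ gives $\hess r\,(X,X)\leq\kappa\coth(\kappa r)$ for every unit $X$ orthogonal to $\nabla r$, while $\hess r(\nabla r,\cdot)=0$; equivalently, as symmetric operators, $\hess r\leq\kappa\coth(\kappa r)\,(\mathrm{Id}-\nabla r\otimes\nabla r)$ (with the convention $\kappa\coth(\kappa r)=1/r$ when $\kappa=0$). Since $\hess\gamma=2\,\nabla r\otimes\nabla r+2r\,\hess r$ and $P$ is positive semi-definite, the monotonicity of $X\mapsto\mathrm{Tr}(P\circ X)$ together with $0\leq P(\nabla r,\nabla r)\leq\mathrm{Tr}P$ yields, off a compact set (where $r\kappa\coth(\kappa r)\geq 1$, so the leftover term has favorable sign),
\[
L\gamma=\mathrm{Tr}(P\circ\hess\gamma)\leq 2r\kappa\coth(\kappa r)\,\mathrm{Tr}P.
\]
Because $\coth(\kappa r)\ra 1$ as $r\ra\infty$ and $C:=\sup_\Sigma\mathrm{Tr}P<+\infty$, we get $L\gamma\leq C'\,r=C'\sqrt{\gamma}$ off a compact set, for a suitable $C'>0$.

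It then remains to fit this linear-in-$\sqrt\gamma$ bound into \eqref{gamma3}. I would choose $G(t)=1+t^2$, which satisfies \eqref{condG}: indeed $G(0)=1>0$, $G'(t)=2t\geq0$, $1/\sqrt{G(t)}\geq 1/(1+t)\notin L^1(+\infty)$, and $tG(\sqrt t)/G(t)=(t+t^2)/(1+t^2)\ra 1$. With this choice $\sqrt{\gamma\,G(\sqrt\gamma)}=r\sqrt{1+r^2}\geq r$, so the estimate above gives $L\gamma\leq C'\sqrt{\gamma}\leq C'\sqrt{\gamma\,G(\sqrt\gamma)}$, which is \eqref{gamma3} with $B=C'$. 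All hypotheses of Theorem \ref{maxprinc} being met, the Omori-Yau maximum principle holds on $\Sigma$ for $L$.

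The one genuine obstacle is that $r$, and hence $\gamma=r^2$, is only Lipschitz across the cut locus of $o$, whereas Theorem \ref{maxprinc} requires $\gamma\in C^2$. I expect this to be the technical heart of the matter: it is resolved in the standard way, either by Calabi's trick (replacing $o$ by a point slightly back along a minimizing geodesic, producing a smooth upper support function that obeys the same Hessian estimate near the bad points) or by a Greene--Wu smoothing giving a genuinely $C^2$ function comparable to $r^2$ and satisfying the same bounds. It is precisely the lower bound on the sectional curvature that makes these comparison and smoothing estimates available, so the curvature hypothesis enters exactly here.
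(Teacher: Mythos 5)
Your proposal is correct and follows essentially the same route as the paper: take $\gamma=r^2$, apply the Hessian comparison theorem with the lower sectional curvature bound to get $L\gamma\leq 2\,\mathrm{Tr}P\,\sqrt{\gamma}\,\psi_c(\sqrt{\gamma})$ off a compact set, and feed this into Theorem \ref{maxprinc} with a quadratic choice of $G$ (the paper takes $G(t)=t^2$ for $t\gg1$, you take $G(t)=1+t^2$; the difference is immaterial). Your explicit flagging of the cut-locus regularity issue is a point the paper's proof passes over silently, but it does not change the argument.
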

\begin{proof}
Let $o\in\Sigma$ be a fixed reference point, denote with $r(p)$ the
distance function from $o$ and set $\gamma(p)=r(p)^2$. Then $\gamma$ satisfies assumptions \eqref{gamma1} and \eqref{gamma2} of
Theorem \ref{maxprinc}. Furthermore, $\gamma$ is smooth within the cut locus of $o$.
Assume that the sectional curvature of $\Sigma$ is bounded from below by a constant $c$. Since $\Sigma$ is assumed to be complete and
noncompact, then $c\leq 0$. Then, by the Hessian comparison theorem within the cut locus of $o$, one has
\begin{equation}
\label{luis.5}
\Hess{r}(p)(v,v)\leq\psi_c(r(p))(\norm{v}^2-\pair{\nabla r(p),v}^2)
\end{equation}
for every $v\in T_p\Sigma$, where $\psi_c(t)$ is given by
\[
\psi_c(t)
=\left\{\begin{array}{lll}
1/t & \mathrm{if} & c=0,\\
\sqrt{-c}\coth(\sqrt{-c}\, t) & \mathrm{if}  & c<0.
\end{array}\right.
\]
Since  $\Hess{\gamma}=2r\Hess{r}+2dr\otimes dr$, we obtain from here that
\begin{eqnarray}
\label{luis.20}
\nonumber \Hess{\gamma} & \leq & 2\sqrt{\gamma}\,\psi_c(\sqrt{\gamma})\pair{,}+2(1-\sqrt{\gamma}\psi_c(\sqrt{\gamma})dr\otimes dr \\
{} & \leq & 2\sqrt{\gamma}\,\psi_c(\sqrt{\gamma})\pair{,}
\end{eqnarray}
for $\gamma$ sufficiently large, since $1-t\psi_c(t)\leq 0$ if $t\gg 1$.
Then, using the fact that $P$ is positive semi-definite we get
\[
L\gamma\leq 2\mathrm{Tr}P \sqrt{\gamma}\,\psi_c(\sqrt{\gamma})
\]
for $\gamma$ sufficiently large. Since $\sup_\Sigma\mathrm{Tr}P<+\infty$ and $\lim_{t\rightarrow+\infty}\psi_c(t)=\sqrt{-c}$, then we conclude that
\[
L\gamma\leq B \sqrt{\gamma\,G(\sqrt{\gamma})}
\]
for a positive constant $B$ and $\gamma$ sufficiently large, where $G(t)$ is given (for instance) by $G(t)=t^2$ with $t\gg 1$. Therefore, by
Theorem \ref{maxprinc} we know that the Omori-Yau maximum principle holds on $\Sigma$ for $L$.
\end{proof}

Lemma \ref{lemmaOY} has the following application in our situation.
\begin{corollary}
\label{OYsigma}
Let $-I \times_{\rho} \p^n$ be a generalized Robertson-Walker spacetime with warping function satisfying $(\log\rho)''\leq0$ and Riemannian fiber $\p^n$ having sectional curvature bounded from below. Let $f:\Sigma^n \ra -I \times_{\rho} \p^n$ be a complete spacelike hypersurface contained in a slab with
$\sup_\Sigma\norm{A}^2<+\infty$.
Then the sectional curvature of $\Sigma$ is bounded from below and the Omori-Yau maximum principle holds on $\Sigma$ for every semi-elliptic operator
$L=\mathrm{Tr}(P \circ \hess)$ with $\sup_\Sigma\mathrm{Tr}P<+\infty$.
\end{corollary}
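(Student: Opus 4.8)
The plan is to deduce the statement from Lemma~\ref{lemmaOY}. When $\Sigma$ is compact the Omori--Yau maximum principle is immediate, since every $C^2$ function attains its extrema, so I would assume $\Sigma$ noncompact. Then Lemma~\ref{lemmaOY} gives the conclusion for \emph{every} semi-elliptic $L=\mathrm{Tr}(P\circ\hess)$ with $\sup_\Sigma\mathrm{Tr}P<+\infty$ the moment we know that the sectional curvature of $\Sigma$ is bounded from below. Thus the entire content of the corollary is this curvature bound, which I would extract from the Gauss equation.

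First I would fix an orthonormal pair $\{X,Y\}$ tangent to $\Sigma$ and write the Gauss equation for $f$ with its future-pointing timelike unit normal $N$,
\[
\sect_\Sigma(X,Y)=\overline{\sect}(X,Y)-\big(\pair{AX,X}\pair{AY,Y}-\pair{AX,Y}^2\big),
\]
the minus sign being forced by $\pair{N,N}=-1$. The extrinsic term is bounded in terms of $\sup_\Sigma\norm{A}^2<+\infty$, so everything reduces to bounding the ambient sectional curvature $\overline{\sect}(X,Y)$ from below over all spacelike $2$-planes tangent to $\Sigma$.

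Next I would compute $\overline{\sect}$ from the warped product structure. Writing $X=aT+\bar X$ and $Y=bT+\bar Y$ with $\bar X,\bar Y$ tangent to the fibre, the ambient sectional curvature is assembled from $\rho''/\rho$, from $(\rho'/\rho)^2$, and from the fibre sectional curvatures, all evaluated at the height $h$. A representative case (take $Y=\bar Y$ purely in the fibre, $X=aT+\bar X$, with $\bar X\perp\bar Y$ in the fibre) gives
\[
\overline{\sect}(X,Y)=\frac{K_\p(\bar X,\bar Y)+\rho'(h)^2}{\rho(h)^2}+a^2\,\frac{K_\p(\bar X,\bar Y)-\rho(h)^2(\log\rho)''(h)}{\rho(h)^2},
\]
where $K_\p$ denotes the fibre sectional curvature. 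Because $f(\Sigma)$ lies in a slab $\Omega(t_1,t_2)$, the height $h$ ranges in a compact subinterval of $I$, so $\rho,\rho',\rho''$ are bounded and $\rho$ is bounded away from $0$; and $K_\p$ is bounded from below by hypothesis. The only potentially unbounded ingredient is the \emph{tilt} $a$, measuring the $T$-component of the plane, which is governed by the hyperbolic angle through $\norm{\nabla h}^2=\Theta^2-1$.

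The hard part is exactly this tilt. In contrast with the Riemannian case, where the analogous angle function lies in $[-1,1]$ and is automatically bounded, here $\Theta\le-1$ is a priori unbounded, so the factor $a^2$ can be large. The hypothesis $(\log\rho)''\le0$ is introduced precisely to cope with this: it makes $-\rho^2(\log\rho)''\ge0$, so the coefficient of $a^2$ is bounded below by $K_\p/\rho^2$ and, when $K_\p\ge0$, is nonnegative, in which case the tilt only helps and $\overline{\sect}$ is bounded below uniformly in $\Theta$. When $K_\p$ is merely bounded below by a negative constant this coefficient can itself be negative, and one must additionally prevent $a$ from being large; establishing that $\sup_\Sigma|\Theta|<+\infty$ from completeness together with $\sup_\Sigma\norm{A}^2<+\infty$ is the delicate point of the argument, and I expect it to be the main obstacle. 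Once $\overline{\sect}$ is bounded from below uniformly over tangent spacelike planes, the Gauss equation gives a uniform lower bound for $\sect_\Sigma$, and Lemma~\ref{lemmaOY} then yields the Omori--Yau maximum principle for every admissible $L$, completing the proof.
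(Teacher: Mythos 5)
Your architecture is the same as the paper's: reduce everything to Lemma~\ref{lemmaOY}, use the Lorentzian Gauss equation to pass from $K_\Sigma$ to the ambient curvature $\overline{K}$ of tangent planes at the cost of $\sup_\Sigma\norm{A}^2<+\infty$, and expand $\overline{K}$ via the warped-product curvature tensor into a fibre term, a $((\log\rho)')^2$ term, and a $-(\log\rho)''$ term multiplying the squared $T$-components. Up to that point the two arguments coincide, and your explicit formula for the tilted plane is in fact the correct one. The problem is that you do not finish: in the only nontrivial case, $K_\p$ bounded below by a negative constant, you reduce the claim to $\sup_\Sigma|\Theta|<+\infty$ and explicitly leave that as an open "main obstacle". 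Nothing in the hypotheses (completeness, confinement to a slab, $\sup_\Sigma\norm{A}^2<+\infty$, $(\log\rho)''\le 0$) controls the hyperbolic angle, and no argument for its boundedness is offered. As a proof, the proposal therefore has a genuine gap at exactly the decisive step.

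For comparison, the paper closes this step by writing the fibre contribution as $\frac{1}{\rho^2(h)}K_\p(X^*,Y^*)\norm{X^*\wedge Y^*}^2$ and asserting
\[
\norm{X^*\wedge Y^*}^2=\norm{X^*}^2\norm{Y^*}^2-\pair{X^*,Y^*}^2=1-\pair{X,T}^2-\pair{Y,T}^2\leq 1,
\]
which makes that contribution bounded below by $-|c|/\rho^2(h)$ \emph{uniformly in the tilt}, so the angle never enters. You should be aware, however, that this identity is the Riemannian one: in the metric $-dt^2+\rho^2\pair{,}_\p$ the projection of a unit spacelike vector orthogonal to $T$ satisfies $\norm{X^\perp}^2=1+\pair{X,T}^2\geq 1$, so in fact $\norm{X^*\wedge Y^*}^2=1+\pair{X,T}^2+\pair{Y,T}^2$, which is unbounded when $\Theta$ is (one can check this normalization against de Sitter space $-\erre\times_{\cosh t}\essen$, whose sectional curvature must come out identically $1$ for tilted planes as well). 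So your instinct that the tilt is the crux is sound, and your formula — with the coefficient of $a^2$ equal to $(K_\p-\rho^2(\log\rho)'')/\rho^2$ — is the right one: the conclusion follows cleanly only when that coefficient is nonnegative, i.e. under $K_\p\geq\sup_I(\rho^2(\log\rho)'')$, or else under the additional hypothesis that the hyperbolic angle is bounded; this point was in fact the subject of a later corrigendum by the authors. In short: your proposal is incomplete, but the step you could not fill is also the step at which the printed proof is flawed.
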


\begin{remark}\label{remarkmarco}
\rm{
From the equality
$$
\norm{A}^2=n^2H_1^2-n(n-1)H_2
$$
it follows that under the assumption $\inf_\Sigma H_2>-\infty$ the condition $\sup_\Sigma \norm{A}^2<+\infty$ is
equivalent to $\sup_\Sigma|H_1|<+\infty$.
}
\end{remark}
\begin{proof}[Proof of Corollary \ref{OYsigma}]
Recall the Gauss equation
$$
\R(X,Y)Z=(\overline{\R}(X,Y)Z)^T-\pair{AX,Z}AY+\pair{AY,Z}AX,
$$
for all vector fields $X,\ Y,\ Z$ tangent to $\Sigma$, where $\R$ and $\overline{\R}$ are the curvature tensors of $\Sigma^n$ and $-I\times_\rho\p$, respectively.
Then, if $\{X, Y\}$ is an orthonormal basis for an arbitrary 2-plane tangent to $\Sigma$, the sectional curvature in $\Sigma$ of that 2-plane is given by
\begin{align}
\label{luis.7}
\nonumber K_{\Sigma}(X,Y)=&\overline{K}(X,Y)-\pair{AX,X}\pair{AY,Y}+\pair{AX,Y}^2\\
\geq & \overline{K}(X,Y)-\norm{AX}\norm{AY}\\
\nonumber \geq & \overline{K}(X,Y)-\|A\|^2,
\end{align}
where $\overline{K}(X,Y)$ denotes the sectional curvature in $-I\times_\rho\p$ of the plane spanned by $\{X, Y\}$. Observe that
the last inequality follows from the fact that
$$
\norm{AX}^2\leq \text{Tr}(A^2)\norm{X}^2=\|A\|^2
$$
for every unit vector $X$ tangent to $\Sigma$. Since we are assuming that $\sup_{\Sigma}\|A\|^2<+\infty$, it suffices to have
$\overline{K}(X,Y)$ bounded from below.
A direct computation using the general relationship between the curvature tensor of a warped product and the curvature tensor of its base and its fiber, as well as the derivatives of the warping function (see for instance Proposition 42 in \cite{oneill}) implies that
\begin{align*}
\overline{\R}(U,V)W=&\R_{\p}({\pi_{\p}}_*U,{\pi_{\p}}_*V){\pi_{\p}}_*W+((\log\rho)')^2(\pi_{\erre})(\pair{U,W}V-\pair{V,W}U)\\
&+(\log\rho)''(\pi_{\erre})\pair{W,T}(\pair{V,T}U-\pair{U,T}V)\\&-(\log\rho)''(\pi_{\erre})(\pair{U,W}\pair{V,T}-\pair{V,W}\pair{U,T})T.
\end{align*}
for every $U,V,W\in TM$, where $T=\frac{\partial}{\partial_t}$ and we are using the notation $U^*$ to denote ${\pi_{\p}}_*U$ for an arbitrary
$U\in TM$. Then, for the orthonormal basis $\{X,Y\}$ we find that
\begin{eqnarray}
\label{luis.8}
\nonumber \overline{K}(X,Y) & = & \frac{1}{\rho^2(h)} K_{\p}(X^*,Y^*)
\norm{X^*\wedge Y^*}^2\\
{} & {} &+((\log\rho'))^2(h)-(\log\rho)''(h)(\pair{X,\nabla h}^2+\pair{Y,\nabla h}^2)\\
\nonumber {} & \geq & \frac{1}{\rho^2(h)} K_{\p}(X^*,Y^*)\norm{X^*\wedge Y^*}^2.
\end{eqnarray}
On the other hand,
\begin{eqnarray*}
\norm{X^*\wedge Y^*}^2 & = &
\norm{X^*}^2\norm{Y^*}^2-\pair{X^*,Y^*}^2\\
{} & = & 1-\pair{X,T}^2-\pair{Y,T}^2\leq 1.
\end{eqnarray*}
Therefore, if $K_{\p}\geq c$ for some constant $c$, we deduce
\begin{equation}
\label{luis.9}
\frac{1}{\rho^2(h)} K_{\p}(X^*,Y^*)\norm{X^*\wedge Y^*}^2
\geq -\frac{|c|}{\rho^2(h)}.
\end{equation}
Finally, since $h$ is a bounded function, we conclude from \eqref{luis.7}, \eqref{luis.8} and \eqref{luis.9} that
the sectional curvature $K(X,Y)$ is bounded from below by an absolute constant.
\end{proof}

Now we are ready to state the main results of this section.
\begin{theorem}\label{thmconsth2h1}
Let $-I \times_{\rho} \p^n$ be a generalized Robertson-Walker spacetime whose warping function satisfies
$(\log \rho)''\leq 0$, with equality only at isolated points, and suppose that $\p^n$ has sectional curvature bounded from below.
Let $f:\Sigma^n \ra -I\times_{\rho} \p^n$ be a complete
spacelike hypersurface contained in a slab with $H_k>0$, for some $2\leq k\leq n$, and $\frac{H_{i+1}}{H_i}=\mathrm{constant}$ for some
$1\leq i\leq k-1$. Assume that  $\sup_{\Sigma}|H_1|< +\infty$ and, for $k\geq 3$, that there exists an
elliptic point in $\Sigma$. Then, $\Sigma$ is a slice.
\end{theorem}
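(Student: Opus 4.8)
The plan is to reduce everything to showing that the height function $h$ is constant, and to extract this from two applications of the Omori--Yau maximum principle of Corollary \ref{OYsigma} to $\sigma(h)$ for the operator $L_i$. First I would fix the orientation and record the standing positivity. Since $H_k>0$ and, for $k\geq 3$, there is an elliptic point, the Garding inequalities \cite{Ga} give $H_1\geq H_2^{1/2}\geq\cdots\geq H_k^{1/k}>0$ on all of $\Sigma$, so every $H_j$ with $1\le j\le k$ is positive, and by Lemmas \ref{lemmaelliptl1lor}--\ref{lemmaelliptlrlor} the tensor $P_i$ is positive definite (hence $L_i$ is elliptic); for $k=2$ one has $i=1$, $H_1^2\geq H_2>0$, and the orientation is chosen so that $H_1>0$. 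Writing $c:=H_{i+1}/H_i>0$, formula \eqref{lrsigmalor} becomes
\[
L_i\sigma(h)=-c_i\rho(h)H_i\bigl((\log\rho)'(h)+\Theta c\bigr).
\]

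The crucial observation, which I expect to be the real engine of the proof, is that the constancy of the ratio forces a uniform positive lower bound on $H_i$. Indeed, Newton's inequality $H_{i+1}^{1/(i+1)}\leq H_i^{1/i}$ together with $H_{i+1}=cH_i$ gives $(cH_i)^{1/(i+1)}\leq H_i^{1/i}$; raising to the power $i(i+1)$ yields $c^iH_i^i\leq H_i^{i+1}$, whence $H_i\geq c^i>0$ everywhere on $\Sigma$. This is exactly what will permit dividing by $H_i$ in the limiting inequalities below, a step that the degeneration $H_i\to0$ would otherwise block. At the same time the Garding chain gives $H_i\leq H_1^i$, so $\sup_\Sigma\mathrm{Tr}P_i=c_i\sup_\Sigma H_i<+\infty$ because $\sup_\Sigma\abs{H_1}<+\infty$; and since $H_2>0$, Remark \ref{remarkmarco} gives $\sup_\Sigma\norm{A}^2<+\infty$. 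Hence Corollary \ref{OYsigma} applies and the Omori--Yau maximum principle holds on $\Sigma$ for $L_i$.

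Next I would run the maximum principle on the bounded function $u=\sigma(h)$ (bounded because $\Sigma$ lies in a slab, where $\rho(h)$ is also pinched between positive constants). At the supremum there is a sequence $p_j$ with $\sigma(h)(p_j)\to\sup_\Sigma\sigma(h)=\sigma(h^*)$, hence $h(p_j)\to h^*$; with $\norm{\nabla\sigma(h)(p_j)}\to0$, and since $\nabla\sigma(h)=\rho(h)\nabla h$ and $\norm{\nabla h}^2=\Theta^2-1$, this forces $\Theta(p_j)\to-1$; and $L_i\sigma(h)(p_j)<1/j$. Dividing the displayed expression by the quantity $c_i\rho(h(p_j))H_i(p_j)$ (with the appropriate sign), which is bounded below by a positive constant thanks to $H_i\geq c^i$ and $\rho(h)\geq\rho_{\min}>0$, and letting $j\to\infty$ yields $(\log\rho)'(h^*)\geq c$. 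The symmetric argument at the infimum produces $q_j$ with $h(q_j)\to h_*$, $\Theta(q_j)\to-1$, $L_i\sigma(h)(q_j)>-1/j$, and gives $(\log\rho)'(h_*)\leq c$.

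Finally I would close the argument using the hypothesis on $\rho$. Since $(\log\rho)''\leq0$ with equality only at isolated points, the function $(\log\rho)'$ is strictly decreasing, so from $h_*\leq h^*$ we get $(\log\rho)'(h^*)\leq(\log\rho)'(h_*)$. Combined with $(\log\rho)'(h^*)\geq c\geq(\log\rho)'(h_*)$ from the previous step, all these quantities coincide, so $(\log\rho)'(h^*)=(\log\rho)'(h_*)$; strict monotonicity then forces $h^*=h_*$, hence $h$ is constant and $\Sigma$ is a slice. The main obstacle is precisely the uniform lower bound $H_i\geq c^i$, since it is what rescues the limiting step; the remaining care is the bookkeeping ensuring $\Theta(p_j)\to-1$ and the finiteness and positivity of the constants used in the division.
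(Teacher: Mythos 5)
Your proof is correct and reaches the paper's conclusion by the same overall strategy (Omori--Yau applied to $\sigma(h)$ for an operator built from $P_i$, then strict concavity of $\log\rho$), but with one genuinely different technical device. The paper works with the \emph{normalized} operator $\hat L_i=\mathrm{Tr}(\hat P_i\circ\hess)$, $\hat P_i=\frac{1}{H_i}P_i$, so that $\mathrm{Tr}\,\hat P_i=c_i$ is automatically bounded and the formula $\hat L_i\sigma(h)=-c_i\bigl(\rho'(h)+\Theta\rho(h)\frac{H_{i+1}}{H_i}\bigr)$ passes to the limit with no division at all; you instead keep $L_i$ un-normalized, bound $\mathrm{Tr}\,P_i=c_iH_i\le c_iH_1^i$ from above via Garding, and then need your Maclaurin-inequality observation $H_i\ge (H_{i+1}/H_i)^i>0$ to divide out the factor $\rho(h)H_i$ in the limiting inequalities. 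That lower bound is correct and is exactly the right substitute for the paper's normalization; the paper's version is slightly cleaner in that it never requires $H_i$ to be bounded away from zero, while yours makes explicit the pleasant fact that constancy of the ratio forces such a bound anyway. One small imprecision: choosing the orientation with $H_1>0$ does not determine the sign of $\Theta$, so from $\norm{\nabla h(p_j)}\to 0$ and $\norm{\nabla h}^2=\Theta^2-1$ you may only conclude $\Theta(p_j)\to\sgn\Theta=\pm1$ (as the paper writes), not $\Theta(p_j)\to-1$; this is harmless, since the resulting chain $(\log\rho)'(h_*)\le-\sgn\Theta\cdot\frac{H_{i+1}}{H_i}\le(\log\rho)'(h^*)$ closes against the strict monotonicity of $(\log\rho)'$ for either sign, but the write-up should be adjusted accordingly.
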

\begin{proof}
First we consider the case $k=2$. From the basic inequality $H_1^2\geq H_2>0$, it follows that we can orient the hypersurface
so that $H_1>0$ on $\Sigma$. We define the operator $\hat{L}_1=\mathrm{Tr}(\hat{P}_1\circ\hess)$ with
$\hat{P}_1=\frac{1}{H_1}P_1$. Note that $\mathrm{Tr}(\hat{P}_1)=c_1$ and therefore, by Corollary \ref{OYsigma} and Remark \ref{remarkmarco}, the Omori-Yau
maximum principle holds on $\Sigma$ for the operator $\hat{L}_1$. We let $\{p_j\}$ and $\{q_j\}$ be two sequences
such that
\begin{align*}
(i) \quad & \lim_{j\ra +\infty} \sigma(h(p_j))=\sup_\Sigma\sigma(h),\\
(ii) \quad & \norm{\nabla\sigma(h)(p_j)}=\rho(h(p_j))\norm{\nabla h(p_j)}<\frac{1}{j},\\
(iii) \quad & \hat{L}_1 \sigma(h)(p_j)<\frac{1}{j},
\end{align*}
and
\begin{align*}
(i) \quad & \lim_{j\ra +\infty} \sigma(h(q_j))=\inf_\Sigma\sigma(h),\\
(ii) \quad & \norm{\nabla\sigma(h)(q_j)}=\rho(h(q_j))\norm{\nabla h(q_j)}<\frac{1}{j},\\
(iii) \quad & \hat{L}_1 \sigma(h)(q_j)>-\frac{1}{j},
\end{align*}
Observe that condition (i) implies that $\lim_{j\ra +\infty}h(p_j)=h^*=\sup_\Sigma h$
and $\lim_{j\ra +\infty}h(q_j)=h_*=\inf_\Sigma h$, because $\sigma(t)$ is strictly increasing. Thus by
condition (ii) we also have $\lim_{j\ra +\infty}\norm{\nabla h(p_j)}=\lim_{j\ra +\infty}\norm{\nabla h(q_j)}=0$, and
$\lim_{j\ra +\infty}\Theta(p_j)=\lim_{j\ra +\infty}\Theta(q_j)=\mathrm{sign}\Theta$.
Therefore, using that
\[
\hat{L}_1\sigma(h)=-c_1\left(\rho'(h)+\Theta\rho(h)\frac{H_2}{H_1}\right)
\]
we get
\[
\frac{1}{j}>\hat{L}_1 \sigma(h)(p_j)=-c_1\left(\rho'(h(p_j))+\Theta(p_j)\rho(h(p_j))\frac{H_2}{H_1}\right)
\]
and
\[
-\frac{1}{j}<\hat{L}_1 \sigma(h)(q_j)=-c_1\left(\rho'(h(q_j))+\Theta(q_j)\rho(h(q_j))\frac{H_2}{H_1}\right).
\]
Making $j\rightarrow+\infty$ in these inequalities we find
\[
(\log\rho)'(h_*)\leq -\mathrm{sign}\Theta\frac{H_2}{H_1}\leq(\log\rho)'(h^*).
\]
Using the assumption $(\log\rho)''\leq 0$ with equality only at isolated points, we conclude from here that $h$ is constant.

For the general case $k\geq 3$, first observe that the existence of an elliptic point and $H_k>0$ implies that
$H_i>0$ and the operators $P_i$ are positive definite for all $1\leq i\leq k-1$. Choose $i$ as in the statement of the theorem,
so that $H_{i+1}/H_i$ is constant and consider the operator $\hat{L}_i=\mathrm{Tr}(\hat{P}_i\circ\hess)$ with
$\hat{P}_i=\frac{1}{H_i}P_i$. Note that $\mathrm{Tr}(\hat{P}_i)=c_i$ and therefore, by Corollary \ref{OYsigma} and Remark \ref{remarkmarco}, the
Omori-Yau maximum principle holds on $\Sigma$ for the operator $\hat{L}_i$. We conclude then as in the case $k=2$ with the aid
of the equation
\[
\hat{L}_i\sigma(h)=-c_i\left(\rho'(h)+\Theta\rho(h)\frac{H_{i+1}}{H_i}\right).
\]
\end{proof}
The next result extends to the complete case Corollary \ref{coro7}.
\begin{theorem}\label{thmGRW2}
Let $-I \times_{\rho} \p^n$ be a generalized Robertson-Walker spacetime whose warping function satisfies
$(\log \rho)''\leq 0$, with equality only at isolated points, and suppose that $\p^n$ has sectional curvature bounded from below.
Let $f:\Sigma^n \ra -I\times_{\rho} \p^n$ be a complete
spacelike hypersurface contained in a slab and assume that either
\begin{itemize}
\item[(i)] $H_2$ is a positive constant, or
\item[(ii)] $H_k$ is constant (with $k\geq 3$) and there exists an elliptic
point in $\Sigma$.
\end{itemize}
If  $\sup_{\Sigma}|H_1|< +\infty$, then $\Sigma$ is a slice.
\end{theorem}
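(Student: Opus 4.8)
The plan is to run the argument of Theorem \ref{thmh2constcomplor} essentially verbatim, with the Omori--Yau maximum principle of Corollary \ref{OYsigma} playing the role of compactness. In case (i) the inequality $H_1^2\ge H_2>0$ lets us orient $\Sigma$ so that $H_1>0$, and $H_2>0$ makes $P_1$ positive definite by Lemma \ref{lemmaelliptl1lor}; in case (ii) we choose the orientation for which the elliptic point has all principal curvatures negative, so that $H_1>0$ and, by the Garding inequalities together with Lemma \ref{lemmaelliptlrlor}, $H_i>0$ and $P_i$ is positive definite for all $1\le i\le k-1$. In either case $\inf_\Sigma H_i>0$ for the relevant indices, so $\sup_\Sigma\norm{A}^2<+\infty$ follows from $\sup_\Sigma|H_1|<+\infty$ and Remark \ref{remarkmarco}, and Corollary \ref{OYsigma} applies (the ambient hypotheses---complete, slab, $\p^n$ with curvature bounded below---are those of the statement). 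Before anything else I would fix the sign of $\rho'$: applying the Omori--Yau principle for $L_0=\Delta$ (which has constant trace $n$) to the bounded function $h$ and letting $\norm{\nabla h}\to0$, hence $\Theta\to\pm1$, along the resulting sequence, the identity \eqref{lrhlor} with $k=0$ forces $(\log\rho)'$ to be nonzero at $h^*$ (or at $h_*$), exactly as in the proof of Corollary \ref{coro6}. Up to the time reversal $t\mapsto -t$ we may thus assume $\rho'>0$ on the slab and $\Theta<0$ on $\Sigma$.

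The heart of the matter is the combined operator $\mathcal{L}=\mathrm{Tr}(\mathcal{P}\circ\hess)$ of Theorem \ref{thmh2constcomplor}, for which the identity \eqref{mathcallrcompact}, $\mathcal{L}\sigma(h)=-c_{k-1}\rho(h)((\log\rho)'(h)^{k}-(-\Theta)^{k}H_{k})$, holds \emph{without} assuming $H_k$ constant, the intermediate $H_1,\dots,H_{k-1}$ cancelling identically in its induction. The obstacle is that $\mathcal{P}=\sum_{i=0}^{k-1}\frac{c_{k-1}}{c_i}(\log\rho)'(h)^{k-1-i}(-\Theta)^iP_i$ has $\mathrm{Tr}\,\mathcal{P}=c_{k-1}\sum_{i=0}^{k-1}(\log\rho)'(h)^{k-1-i}(-\Theta)^iH_i$, and the factors $(-\Theta)^i$ are not controlled: nothing in the hypotheses bounds $\Theta$, since $\Theta^2=1+\norm{\nabla h}^2$ and $\norm{\nabla h}$ may be unbounded. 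To repair this I would replace $\mathcal{L}$ by the normalized operator $\tilde{\mathcal{L}}=(-\Theta)^{-(k-1)}\mathcal{L}=\mathrm{Tr}(\tilde{\mathcal{P}}\circ\hess)$, where $\tilde{\mathcal{P}}=\sum_{i=0}^{k-1}\frac{c_{k-1}}{c_i}\big((\log\rho)'(h)/(-\Theta)\big)^{k-1-i}P_i$. Since $0<(\log\rho)'(h)/(-\Theta)\le(\log\rho)'(h)$ is bounded on the slab and each $H_i$ is bounded, $\sup_\Sigma\mathrm{Tr}\,\tilde{\mathcal{P}}<+\infty$; and $\tilde{\mathcal{P}}$ is a positive combination of the positive definite $P_i$, hence positive definite. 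Thus $\tilde{\mathcal{L}}$ is an admissible operator for Corollary \ref{OYsigma}.

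It then remains to feed $\sigma(h)$ into the Omori--Yau principle for $\tilde{\mathcal{L}}$, exactly as in Theorem \ref{thmconsth2h1}. Because $\sigma$ is strictly increasing and $\rho(h)$ is bounded below, the supremum (resp. infimum) sequence $\{p_j\}$ (resp. $\{q_j\}$) for $\sigma(h)$ satisfies $h(p_j)\to h^*$, $\norm{\nabla h(p_j)}\to0$ and $\Theta(p_j)\to-1$. Evaluating
\[
\tilde{\mathcal{L}}\sigma(h)=-c_{k-1}\rho(h)\Big(\frac{(\log\rho)'(h)^{k}}{(-\Theta)^{k-1}}-(-\Theta)H_{k}\Big)
\]
along these sequences and passing to the limit (here $H_k$ constant is used for the first time) yields $(\log\rho)'(h_*)^{k}\le H_k\le(\log\rho)'(h^*)^{k}$. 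Since $(\log\rho)'$ is positive and strictly decreasing---from $(\log\rho)''\le0$ with equality only at isolated points---and $h_*\le h^*$, this chain collapses to $(\log\rho)'(h_*)=(\log\rho)'(h^*)$, forcing $h_*=h^*$; hence $h$ is constant and $\Sigma$ is a slice.

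The one genuinely new point, and the step I expect to be the main obstacle, is the passage from $\mathcal{L}$ to $\tilde{\mathcal{L}}$: one must notice that dividing by $(-\Theta)^{k-1}$ simultaneously turns the unbounded coefficients $(-\Theta)^i$ into the bounded ratios $\big((\log\rho)'(h)/(-\Theta)\big)^{k-1-i}$, restoring $\sup_\Sigma\mathrm{Tr}\,\tilde{\mathcal{P}}<+\infty$ as demanded by Corollary \ref{OYsigma}, while leaving the right-hand side of \eqref{mathcallrcompact} in a form whose limit along $\Theta\to-1$ is still the clean compact expression. Everything else is the compact argument transcribed through the maximum principle, the sign bookkeeping for the opposite orientation being handled, as usual, by the time reversal $t\mapsto -t$.
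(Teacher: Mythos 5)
Your proposal is correct and follows essentially the same route as the paper: the normalized operator $\tilde{\mathcal{L}}=(-\Theta)^{-(k-1)}\mathcal{L}$ that you introduce as a ``repair'' of the compact-case operator is precisely the operator the paper itself uses in the complete case, with the same pointwise identity for $\tilde{\mathcal{L}}\sigma(h)$, the same trace bound via $|1/\Theta|\leq 1$ and $H_i\leq H_1^i$, and the same two-sided Omori--Yau argument on $\sigma(h)$. The only cosmetic difference is that you reduce to the case $\Theta<0$, $\rho'>0$ by time reversal, whereas the paper carries $\sgn\Theta$ through both sign cases explicitly.
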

\begin{proof}
Consider first the case $k=2$. Since $H_2 >0$ it follows by Lemma \ref{lemmaelliptl1lor} that the operator $L_1$ is elliptic with respect to the orientation for which $H_1>0$ (see the proof of Lemma 3.2 in \cite{aliascolares} for the details). Assume first that $\Theta<0$ with respect to this orientation and let us show that $\rho'(h)>0$. To do that, we apply the Omori-Yau maximum principle to the Laplacian to assure the existence of a sequence $\{p_j\}$ with the following properties
\begin{align*}
(i) \quad & \lim_{j\ra +\infty} h(p_j)=\inf_\Sigma h=h^*,\\
(ii) \quad & \norm{\nabla h(p_j)}<\frac{1}{j},\\
(iii) \quad & \Delta h(p_j)<\frac{1}{j}.
\end{align*}
Therefore, making $j\ra+\infty$ in the following inequality
$$
\frac1j>\Delta h(p_j)=-(\log \rho)'(h(p_j))(n+\norm{\nabla h(p_j)}^2)-n\Theta(p_j)H_1(p_j)
$$
we get
$$
-(\log\rho)'(h^*)+\liminf_{j\ra+\infty}H_1(p_j)\leq 0.
$$
Since
$$
\liminf_{j\ra+\infty}H_1(p_j)\geq \sqrt{H_2}>0,
$$
and $(\log\rho)'(h^*)\leq (\log\rho)'(h)$, it must be $(\log\rho)'(h)>0$, which means $\rho'(h)>0$ on $\Sigma$.
On the other hand, in the case where $\Theta>0$ with respect to this orientation, let us show that $\rho'(h)<0$. To do that, we apply the Omori-Yau maximum principle to the Laplacian to assure the existence of a sequence $\{q_j\}$ with the following properties
\begin{align*}
(i) \quad & \lim_{j\ra +\infty} h(q_j)=\inf_\Sigma h=h_*,\\
(ii) \quad & \norm{\nabla h(q_j)}<\frac{1}{j},\\
(iii) \quad & \Delta h(q_j)>-\frac{1}{j}.
\end{align*}
Therefore, making $j\ra+\infty$ in the following inequality
$$
-\frac1j<\Delta h(q_j)=-(\log \rho)'(h(q_j))(n+\norm{\nabla h(q_j)}^2)-n\Theta(q_j)H_1(q_j)
$$
and reasoning exactly as before we conclude that
$0>(\log\rho)'(h_*)\geq (\log\rho)'(h)$ and it must be $(\log\rho)'(h)<0$, which means $\rho'(h)<0$ on $\Sigma$.

Therefore, we have that for the chosen orientation
$$
(\log\rho)'(h)\Theta<0.
$$
By equation \eqref{lrsigmalor}
$$
L_1 \sigma(h)=-c_1\rho(h)((\log\rho)'(h)H_1+\Theta H_2).
$$
Consider the operator
$$
\mathcal{L}=-\frac{1}{\Theta}\frac{c_1}{c_0}(\log \rho)'(h)\Delta+L_1=\mathrm{Tr}(\mathcal{P}\circ\hess),
$$
where
\begin{displaymath}
\mathcal{P}=-(n-1)\frac{(\log \rho)'(h)}{\Theta}I+P_1=(n-1)\Big|\frac{(\log \rho)'(h)}{\Theta}\Big|I+P_1
\end{displaymath}
is positive definite. Since $|1/\Theta|\leq 1$, then $\sup_\Sigma|(\log\rho)'(h)|<+\infty$ and $\sup_\Sigma H_1<+\infty$,
$$
\mathrm{Tr}\mathcal{P}=c_1\Big(\Big|\frac{(\log \rho(h))'}{\Theta}\Big|+H_1\Big)< +\infty.
$$
Hence $\mathcal{L}$ is an elliptic operator and the trace of $\mathcal{P}$ is bounded from above.
By Corollary \ref{OYsigma} (see also Remark \ref{remarkmarco}) we can then apply the Omori-Yau maximum principle.
Since $h^*<+\infty$ there exists a sequence $\{p_j\} \subset \Sigma$ such that
\begin{align*}
\lim_{j \ra +\infty} (\sigma \circ h) (p_j)&=(\sigma \circ h)^*=\sigma(h^*),\\
\norm{\nabla(\sigma \circ h)(p_j)}&=\rho(h(p_j))\norm{\nabla h(p_j)}<\frac 1j,\\
\mathcal{L}(\sigma \circ h)(p_j)&<\frac 1j.
\end{align*}
Using
$$
\mathcal{L} \sigma(h)=-\frac{c_1}{\Theta}\rho(h)(-(\log\rho)'(h)^2+\Theta^2H_2),
$$
taking the limit for $j \ra +\infty$ and observing that $\Theta(p_j) \ra \sgn{\Theta}=\pm 1$ as $j \ra +\infty$, we find
$$
0 \geq \sgn{\Theta}((\log\rho)'(h^*)^2-H_{2}).
$$
On the other hand, since $h$ is bounded from below, we can find a sequence $\{q_j\} \subset \Sigma$ such that
\begin{align*}
\lim_{j \ra +\infty} (\sigma \circ h) (q_j)&=(\sigma \circ h)_*=\sigma(h_*),\\
\norm{\nabla (\sigma \circ h)(q_j)}&=\rho(h(q_j))\norm{\nabla h(q_j)}<\frac{1}{j},\\
\mathcal{L} (\sigma \circ h)(q_j) &> -\frac{1}{j}
\end{align*}
Hence, proceeding as above we find
$$
0 \leq \sgn{\Theta}((\log\rho)'(h_*)^2-H_{2}).
$$
Thus, taking into account that $(\log\rho)'(h)\Theta<0$ snd to the fact that $(\log\rho)'$ is a decreasing function, we get $h^*=h_*$.\\
For the general case $k\geq3$, since there exists an elliptic point and $H_k>0$, it follows by Lemma \ref{lemmaelliptlrlor} that $H_j >0$ and the operators $L_j$ are elliptic for all $1 \leq j \leq k-1$ with respect to an appropriate orientation. Reasoning as in the case $k=2$, one can see that $(\log\rho)'(h)\Theta<0$ for that orientation.
Furthermore, since, by the Newton inequalities
$$
H_j \leq H_1^j<+\infty,
$$
each $H_j$ is bounded from above.
By equation \eqref{lrsigmalor}
$$
L_{k-1} \sigma(h)=-c_{k-1}\rho(h)((\log\rho)'(h)H_{k-1}+\Theta H_k).
$$
Consider the operator
\begin{align*}
\mathcal{L}=&\mathrm{Tr}\Big(\Big[\sum_{i=0}^{k-1}\frac{c_{k-1}}{c_i}\Big(-\frac{(\log \rho)'(h)}{\Theta}Big)^{k-1-i}P_i\Big]\circ \hess \Big)\\
=&\sum_{i=0}^{k-1}\frac{c_{k-1}}{c_i}\Big(-\frac{(\log \rho)'(h)}{\Theta}\Big)^{k-1-i}L_i\\
=&\sum_{i=0}^{k-1}\frac{c_{k-1}}{c_i}\Big|\frac{(\log \rho)'(h)}{\Theta}\Big|^{k-1-i}L_i.
\end{align*}
Since $\mathcal{L}$ is a positive linear combination of the $L_i$'s, it is elliptic. Moreover
\begin{align*}
\mathrm{Tr}(\mathcal{P})=&c_{k-1}\sum_{i=0}^{k-1}\Big|\frac{(\log\rho)'(h)}{\Theta}\Big|^{k-1-i}H_i\\
\leq &c_{k-1}\sum_{i=0}^{k-1}|(\log\rho)'(h)|^{k-1-i}H_1^i\\
\leq&c_{k-1}\sum_{i=0}^{k-1}\sup_\Sigma|(\log\rho)'(h)|^{k-1-i}\sup_\Sigma H_1^i
\end{align*}
is bounded from above.
Similarly as in the proof of (\ref{mathcallrcompact}), it is easy to prove by induction on $k$ that
\begin{equation}\label{mathcallr}
\mathcal{L} \sigma(h)=\frac{c_{k-1}}{(-\Theta)^{k-1}}\rho(h)(-((\log\rho)'(h))^{k}+(-1)^k\Theta^kH_k).
\end{equation}
We can then apply the Omori-Yau maximum principle to the operator $\mathcal{L}$.
Since $h^*<+\infty$ there exists a sequence $\{p_j\} \subset \Sigma$ such that
\begin{align*}
\lim_{j \ra +\infty} (\sigma \circ h) (p_j)&=(\sigma \circ h)^*=\sigma(h^*),\\
\norm{\nabla(\sigma \circ h)(p_j)}&=\rho(h(p_j))\norm{\nabla h(p_j)}<\frac 1i,\\
\mathcal{L}(\sigma \circ h)(p_j)&<\frac 1j.
\end{align*}
Hence, taking the limit in \eqref{mathcallr} for $j \ra +\infty$ and observing that $\Theta \ra \sgn{\Theta}=\pm 1$ as $j \ra +\infty$, we find
$$
0 \geq \sgn{\Theta}(((\log\rho)'(h^*))^k-H_k).
$$
On the other hand, since $h$ is bounded from below, we can find a sequence $\{q_j\} \subset \Sigma$ such that
\begin{align*}
\lim_{j \ra +\infty} (\sigma \circ h) (q_j)&=(\sigma \circ h)_*=\sigma(h_*),\\
\norm{\nabla (\sigma \circ h)(q_j)}&=\rho(h(q_j))\norm{\nabla h(q_j)}<\frac{1}{j},\\
\mathcal{L} (\sigma \circ h)(q_j) &> -\frac{1}{j}
\end{align*}
Hence, proceeding as above we find
$$
0 \leq\sgn{\Theta}(((\log\rho)'(h_*))^k-H_k).
$$
Thus we conclude as in case $k=2$.
\end{proof}
\section{Further results for complete spacelike hypersurfaces}\label{further}
Recall that a spacetime obeys the null convergence condition (NCC) if its Ricci curvature is nonnegative on lightlike directions. In the case of a generalized Robertson-Walker spacetime $-I\times_\rho\p^n$ this is equivalent to
\begin{equation}\label{NCC}
\ricc_\p\geq(n-1)\sup_I(\rho^2(\log\rho)'')\pair{,}_\p.
\end{equation}
In \cite[Theorem 6]{montiel2} (see also Theorem 9.1 in \cite{aliascolares}) it was proved that the only compact spacelike hypersurfaces with constant mean curvature in a spatially closed generalized Robertson-Walker spacetime obeying the NCC are the slices, unless in the case where the ambient space is isometric to the de Sitter spacetime in a neghbourhood of $\Sigma$, which must be a round umbilical hypersphere. Moreover, the latter case cannot occur if we assume that the inequality in \eqref{NCC} is strict. Our first result in this section extends this to the complete noncompact case as follows
\begin{theorem}\label{thNCC1}
Let $-I \times_{\rho} \p^n$ be a generalized Robertson-Walker spacetime obeying the strict null convergence condition, that is, satisfying
\begin{equation}\label{strictNCC}
\ricc_\p > (n-1)\sup_I ((\log\rho)'' \rho^2)\pair{,}_\p.
\end{equation}
Let $f:\Sigma^n \ra -I \times_{\rho} \p^n$ be a complete spacelike hypersurface of constant mean curvature contained in a slab $\Omega(t_1,t_2)$.
Suppose that $\Sigma^n$ is parabolic and $\sup_\Sigma|\Theta|<+\infty$. Then $f(\Sigma^n)$ is a slice.
\end{theorem}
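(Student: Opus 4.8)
The plan is to transplant the compact, Minkowski-formula argument for the null convergence condition into the parabolic setting, trading the integral formula for a single bounded superharmonic function. Throughout let $N$ be the future-pointing Gauss map, so $\Theta=\pair{N,T}\leq-1$, let $h$ be the height and recall $\norm{\nabla h}^2=\Theta^2-1$. Since $f(\Sigma)$ lies in a slab, $h$ is bounded and $\rho(h)$ is pinched between two positive constants; combined with $\sup_\Sigma|\Theta|<+\infty$ this makes both $\rho(h)\Theta$ and $\sigma(h)$ bounded. This is the only role of the slab and of $\sup_\Sigma|\Theta|<+\infty$.

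The candidate function is
\[
u=\rho(h)\Theta+H\sigma(h),
\]
the support function of the closed conformal field $\rho(\pi_\erre)T$ corrected by a multiple of $\sigma(h)$; by the previous paragraph it is bounded. First I would record, from the computations in the proof of Proposition \ref{propsigmalor} together with the Weingarten formula $\overline{\nabla}_XN=-AX$, the identities $\nabla(\rho(h)\Theta)=\rho(h)A\nabla h$ and $\hess h(X)=\Theta AX-(\log\rho)'(h)(X+\pair{X,\nabla h}\nabla h)$. Taking the divergence of $\rho(h)A\nabla h$ and using that $H=H_1$ is constant (so $\nabla\,\mathrm{Tr}A=0$) yields
\[
\Delta(\rho(h)\Theta)=\rho(h)\bigl(\pair{\mathrm{div}A,\nabla h}+\Theta\norm{A}^2\bigr)+nH\rho'(h).
\]
Combining this with $\Delta\sigma(h)=-n(\rho'(h)+\Theta\rho(h)H)$ from \eqref{lrsigmalor}, the terms $nH\rho'(h)$ cancel, leaving
\[
\Delta u=\rho(h)\bigl(\pair{\mathrm{div}A,\nabla h}+\Theta(\norm{A}^2-nH^2)\bigr).
\]

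The crux, and the step I expect to be the main obstacle, is to evaluate the Codazzi term $\pair{\mathrm{div}A,\nabla h}$ and to see that the null convergence condition makes it cooperate with the umbilicity defect. Using $(\nabla_XA)Y-(\nabla_YA)X=-(\overline{\R}(X,Y)N)^\top$ and $\mathrm{Tr}A=-nH$ constant, one finds $\pair{\mathrm{div}A,\nabla h}=-\overline{\ricc}(N,\nabla h)$; then, inserting the warped-product expression for $\overline{\ricc}$ (as in the proof of Corollary \ref{OYsigma}) and writing $N^*=\pi_{\p*}N$ with $\rho^2(h)\norm{N^*}_\p^2=\norm{\nabla h}^2$, a direct computation collapses this to $\pair{\mathrm{div}A,\nabla h}=\Theta\,\mathcal{N}$, where
\[
\mathcal{N}:=\ricc_\p(N^*,N^*)-(n-1)(\log\rho)''(h)\norm{\nabla h}^2.
\]
Hence $\Delta u=\rho(h)\Theta\bigl(\mathcal{N}+\norm{A}^2-nH^2\bigr)$. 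Now the strict null convergence condition \eqref{strictNCC} forces $\mathcal{N}\geq0$, with equality precisely when $N^*=0$ (since $\sup_I(\rho^2(\log\rho)'')\geq\rho^2(h)(\log\rho)''(h)$), while $\norm{A}^2-nH^2=\norm{A+HI}^2\geq0$ is the squared norm of the traceless part of $A$; as $\rho(h)\Theta<0$ this gives $\Delta u\leq0$, i.e. $u$ is superharmonic.

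Finally I would invoke parabolicity: a bounded-below superharmonic function on a parabolic manifold is constant, so $u$ is constant and $\Delta u\equiv0$. Since $\rho(h)\Theta<0$ this forces $\mathcal{N}+\norm{A}^2-nH^2\equiv0$, hence in particular $\mathcal{N}\equiv0$. Under the \emph{strict} inequality \eqref{strictNCC}, $\mathcal{N}=0$ can hold only where $N^*=0$, i.e. where $\norm{\nabla h}^2=\rho^2(h)\norm{N^*}_\p^2=0$; therefore $\nabla h\equiv0$, $h$ is constant, and $f(\Sigma)$ is a slice. Note that parabolicity enters only through the Laplacian, so no bound on a trace of $P$ or on $\nabla u$ is required, and $\sup_\Sigma|\Theta|<+\infty$ is used exactly to guarantee that $u$ is bounded.
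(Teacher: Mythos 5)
Your argument is correct and is essentially the paper's own proof: the same auxiliary function $\phi=H_1\sigma(h)+\rho(h)\Theta$, shown to be bounded and superharmonic (your identity $\Delta u=\rho(h)\Theta\bigl(\mathcal{N}+\norm{A}^2-nH^2\bigr)$ agrees with the paper's formula, since $\norm{A}^2-nH_1^2=n(n-1)(H_1^2-H_2)$), followed by parabolicity forcing $\Delta u\equiv 0$ and the strict null convergence condition forcing $N^*=0$, i.e. $\nabla h\equiv 0$. The only difference is that you re-derive the Laplacian of $\rho(h)\Theta$ via Codazzi and the warped-product curvature, where the paper simply cites Lemma \ref{lapltheta} (Corollary 8.2 of \cite{aliascolares}).
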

For the proof of the theorem we begin recalling the following computational result from \cite{aliascolares}
\begin{lemma}[Corollary 8.2 in \cite{aliascolares}]\label{lapltheta}
Let $\Sigma^n$ be a spacelike hypersurface immersed into a generalized Robertson-Walker spacetime $-I\times_{\rho}\p^n$, with angle function $\Theta$ and height function $h$. Let $\hat{\Theta}=\rho(h)\Theta$. Then we have
\begin{align*}
\Delta \hat{\Theta}=&n\rho(h)\pair{\nabla h, \nabla H_1}+n\rho'(h)H_1+n\hat{\Theta}(nH_1^2-(n-1)H_2)\\
&+\hat{\Theta}(\ricc_{\p}(N^*,N^*)-(n-1)(\log\rho)''(h)\norm{\nabla h}^2).
\end{align*}
\end{lemma}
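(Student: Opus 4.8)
The plan is to recognize $\hat{\Theta}$ as the support function of a closed conformal vector field and to compute its Laplacian by a divergence argument resting on the Gauss and Codazzi equations. First I would set $V=\rho(h)T$, the restriction to $\Sigma$ of the closed conformal field $\rho(\pi_I)T$ on $M^{n+1}$, which satisfies $\overline{\nabla}_Z V=\rho'(\pi_I)Z$ for every $Z$ tangent to $M^{n+1}$, exactly as recalled in the proof of Proposition \ref{propsigmalor}. Then $\hat{\Theta}=\pair{N,V}$, and a one-line computation using $\overline{\nabla}_X N=-AX$ together with $\pair{N,e_i}=0$ gives the gradient
\[
\nabla\hat{\Theta}=-AV^{\top}=\rho(h)A\nabla h,
\]
where $V^{\top}=-\rho(h)\nabla h$ is the tangential part of $V$ (recall $\nabla h=-T-\Theta N$).

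Next I would take the divergence $\Delta\hat{\Theta}=\mathrm{div}(\nabla\hat{\Theta})=-\mathrm{div}(AV^{\top})$. Differentiating $AV^{\top}$ in a local orthonormal frame geodesic at a point, and using the Gauss formula $\overline{\nabla}_X Y=\nabla_X Y-\pair{AX,Y}N$ together with the identity $\nabla_{e_i}V^{\top}=\rho'(h)e_i-\hat{\Theta}Ae_i$, one is led to
\[
\Delta\hat{\Theta}=\rho(h)\pair{\mathrm{div}A,\nabla h}+n\rho'(h)H_1+\hat{\Theta}\,\mathrm{Tr}(A^2).
\]
Here the middle term comes from the conformal factor contracted with $A$ via $\mathrm{Tr}A=-nH_1$, and the relation $\mathrm{Tr}(A^2)=\norm{A}^2=n^2H_1^2-n(n-1)H_2$ immediately reorganizes the last term into the stated $n\hat{\Theta}(nH_1^2-(n-1)H_2)$.

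The term $\pair{\mathrm{div}A,\nabla h}$ I would then handle by the contracted Codazzi equation. Starting from $(\nabla_XA)Y-(\nabla_YA)X=-(\overline{\R}(X,Y)N)^{\top}$ and contracting, one obtains, for any tangent $Z$,
\[
\pair{\mathrm{div}A,Z}=Z(\mathrm{Tr}A)-\overline{\ricc}(Z,N),
\]
since $\sum_i\pair{\overline{\R}(e_i,Z)N,e_i}=\overline{\ricc}(Z,N)$ (the $N,N$ contribution vanishing). Taking $Z=\nabla h$ and $\mathrm{Tr}A=-nH_1$, this contributes a term proportional to $\pair{\nabla h,\nabla H_1}$ together with the ambient Ricci term $-\rho(h)\overline{\ricc}(\nabla h,N)$, leaving only the latter to be identified with the curvature contribution in the statement.

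The final and most delicate step is to evaluate this ambient Ricci contraction and match it to $\hat{\Theta}\big(\ricc_{\p}(N^*,N^*)-(n-1)(\log\rho)''(h)\norm{\nabla h}^2\big)$. For this I would feed the explicit warped-product curvature tensor reproduced in the proof of Corollary \ref{OYsigma} into $\overline{\ricc}(\nabla h,N)$, using the orthogonal decomposition $N=-\Theta T+N_h$ with $N^*=\pi_{\p*}N_h$ and $\norm{N_h}^2=\Theta^2-1=\norm{\nabla h}^2$, together with $\nabla h=-T-\Theta N$. The fiber part of the tensor contracts to $\ricc_\p(N^*,N^*)$, while the $((\log\rho)')^2$ and the two $(\log\rho)''$ terms recombine, after multiplication by $\rho(h)$ and the substitution $\hat{\Theta}=\rho(h)\Theta$, into the Hessian-of-$\log\rho$ contribution. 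The main obstacle is precisely this bookkeeping: isolating the vertical and horizontal components and tracking the signs produced by the timelike direction so that the $((\log\rho)')^2$ contributions cancel and only $\ricc_\p(N^*,N^*)$ and $(n-1)(\log\rho)''(h)\norm{\nabla h}^2$ survive.
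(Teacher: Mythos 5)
Your plan is sound, and in fact there is nothing in this paper to compare it against: the lemma is quoted without proof from Corollary 8.2 of \cite{aliascolares}, and your outline is essentially the derivation used there (the cited source computes $L_k\hat{\Theta}$ in general from Newton-tensor divergences plus Codazzi; your computation is the $k=0$ case). The steps you commit to are correct under the conventions this paper fixes in the proof of Proposition \ref{propsigmalor}, namely $\overline{\nabla}_XN=-AX$, $H_1=-\frac{1}{n}\mathrm{Tr}(A)$, $\nabla h=-T-\Theta N$: one checks $\nabla\hat{\Theta}=\rho(h)A\nabla h$, your intermediate display $\Delta\hat{\Theta}=\rho(h)\pair{\mathrm{div}A,\nabla h}+n\rho'(h)H_1+\hat{\Theta}\mathrm{Tr}(A^2)$, the contracted Codazzi identity $\pair{\mathrm{div}A,Z}=Z(\mathrm{Tr}A)-\overline{\ricc}(Z,N)$, and the final contraction, which does give $\overline{\ricc}(\nabla h,N)=-\Theta\bigl(\ricc_{\p}(N^*,N^*)-(n-1)(\log\rho)''(h)\norm{\nabla h}^2\bigr)$, i.e.\ exactly the stated curvature block after multiplying by $-\rho(h)$. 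One consistency warning for your last step: your Codazzi equation $(\nabla_XA)Y-(\nabla_YA)X=-(\overline{\R}(X,Y)N)^{\top}$ presumes the convention $\R(X,Y)=[\nabla_X,\nabla_Y]-\nabla_{[X,Y]}$, whereas the warped-product tensor reproduced in the proof of Corollary \ref{OYsigma} follows O'Neill's opposite convention (there $\overline{K}(X,Y)=\pair{\overline{\R}(X,Y)X,Y}$, and the paper's Gauss equation carries the corresponding signs); if you feed that tensor into $\sum_i\pair{\overline{\R}(e_i,Z)N,e_i}$ as you propose, it produces $-\overline{\ricc}(Z,N)$, so you must flip a sign or the whole curvature block comes out wrong.

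The substantive point is hidden in your phrase ``a term proportional to $\pair{\nabla h,\nabla H_1}$'': the proportionality constant is negative, and it does not match the statement. Since $\mathrm{Tr}A=-nH_1$, your own identities yield
\[
\Delta\hat{\Theta}=-n\rho(h)\pair{\nabla h,\nabla H_1}+n\rho'(h)H_1+n\hat{\Theta}(nH_1^2-(n-1)H_2)+\hat{\Theta}\bigl(\ricc_{\p}(N^*,N^*)-(n-1)(\log\rho)''(h)\norm{\nabla h}^2\bigr),
\]
with a minus sign on the first term, not the plus sign printed in the lemma. The minus sign is the correct one under this paper's conventions: take a spacelike curve $(t(s),x(s))$ in $\elle^2=-\erre\times\erre$ with $t'=\sinh\phi$, $x'=\cosh\phi$, so that $\Theta=-\cosh\phi$ and, by \eqref{lrhlor} (which pins the sign of $H_1$ independently of Weingarten conventions, via $\Delta h=-\Theta H_1$), $H_1=\phi'$; then $\Delta\Theta=\Theta''=-\phi''\sinh\phi-(\phi')^2\cosh\phi$, which agrees with the displayed formula and contradicts the printed one whenever $\phi''\sinh\phi\neq 0$. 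So your argument, carried out honestly, proves the lemma only up to the sign of the $\pair{\nabla h,\nabla H_1}$ term; the discrepancy is a typo in the statement as quoted rather than a flaw in your method, and it is harmless for the paper, since the lemma is invoked only in Theorem \ref{thNCC1} with $H_1$ constant, where that term vanishes. To submit this as a proof you should either derive the corrected sign and say so, or not claim to recover the statement verbatim.
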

\begin{proof}[Proof of Theorem \ref{thNCC1}]
Let us choose on $\Sigma$ the orientation such that $\Theta<0$ and consider the function $\phi=H_1\sigma(h)+\hat{\Theta}$. Since the mean curvature is constant, by Equation \eqref{lrsigmalor} and Lemma \ref{lapltheta} we have
$$
\Delta\phi=\hat{\Theta}(n(n-1)(H_1^2-H_2)+\ricc_{\p}(N^*,N^*)-(n-1)(\log\rho)''(h)\norm{\nabla h}^2).
$$
Reasoning as in the proof of Theorem 9.1 in \cite{aliascolares}, it follows from the hypotheses that $\Delta\phi\leq0$ on $\Sigma$. Since $\sup_\Sigma|\Theta|<+\infty$ and $\Sigma$ is contained in a slab, $\phi$ is bounded from below. Moreover, $\Sigma$ being parabolic implies that $\phi$ must be constant and $\Delta\phi=0$.
In particular
$$
\ricc_{\p}(N^*,N^*)-(n-1)(\log\rho)''(h)\norm{\nabla h}^2=0.
$$
Observe that
$$\norm{\nabla h}^2=\norm{N^*}^2=\rho^2(h)\pair{N^*,N^*}_\p.
$$
Therefore by the strict null convergence condition, it must be $\nabla h=0$ and hence $\Sigma$ is a slice.
\end{proof}
In order to extend this reasoning to the higher order mean curvature, we need the following computational result.
\begin{lemma}[Corollary 8.4 in \cite{aliascolares}]\label{lrthetasecconst}
Let $\Sigma^n$ be a spacelike hypersurface immersed into a RW spacetime $-I\times_{\rho}\p^n$, with angle function $\Theta$ and height function $h$. Assume that $\p^n$ has constant sectional curvature $\kappa$ and let $\hat{\Theta}=\rho(h) \Theta$. Then, for every $k=0,...,n-1$ we have
\begin{align*}
L_k \hat{\Theta}=&{n \choose {k+1}}\rho(h)\pair{\nabla h, \nabla H_{k+1}}+\rho'(h)c_k H_{k+1}\\
&+\hat{\Theta}\Big(\frac{\kappa}{\rho^2(h)}-(\log \rho)''(h)\Big)(\norm{\nabla h}^2c_kH_k-\pair{P_k \nabla h,\nabla h})\\&+\hat{\Theta}{n \choose{k+1}} (nH_1H_{k+1}-(n-k-1)H_{k+2})
\end{align*}
\end{lemma}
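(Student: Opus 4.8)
The plan is to compute $L_k\hat{\Theta}=\mathrm{Tr}(P_k\circ\hess\hat{\Theta})$ directly, by first obtaining $\nabla\hat{\Theta}$, then $\hess\hat{\Theta}$, and finally tracing against $P_k$. Writing $\hat{\Theta}=\pair{N,\rho(h)T}$ and using that $\rho(h)T$ is the closed conformal field with $\overline{\nabla}_X(\rho(h)T)=\rho'(h)X$, together with the Weingarten relation $\overline{\nabla}_XN=-AX$ (the one compatible with \eqref{lrsigmalor}), a short computation gives $\nabla\hat{\Theta}=\rho(h)A\nabla h$. Differentiating once more and inserting the expression for $\nabla_X\nabla h$ recorded in the proof of Proposition \ref{propsigmalor} (equivalently $\hess\sigma(h)(X)=-\rho'(h)X+\hat{\Theta}AX$), the Hessian splits into three pieces:
\[
\hess\hat{\Theta}(X)=\rho'(h)\pair{\nabla h,X}A\nabla h+\rho(h)(\nabla_XA)\nabla h+\rho(h)A\,\hess h(X).
\]

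Tracing the first and third pieces against $P_k$ is purely algebraic, since $A$ and $P_k$ commute. The two contributions involving $\pair{AP_k\nabla h,\nabla h}$ cancel, and what survives is evaluated through the trace identities $\mathrm{Tr}(AP_k)=-c_kH_{k+1}$ and $\mathrm{Tr}(A^2P_k)=\binom{n}{k+1}(nH_1H_{k+1}-(n-k-1)H_{k+2})$; this produces exactly the second and fourth terms of the statement, namely $\rho'(h)c_kH_{k+1}$ and $\hat{\Theta}\binom{n}{k+1}(nH_1H_{k+1}-(n-k-1)H_{k+2})$.

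The substantial part is the middle piece $\rho(h)\,\mathrm{Tr}(P_k(\nabla_{\cdot}A)\nabla h)$. Using the symmetry of $\nabla_XA$ and the Codazzi equation to exchange the differentiation direction with $\nabla h$, I would split this contraction into a \emph{diagonal} part and a curvature defect. The diagonal part is controlled by the identity $\mathrm{Tr}(P_k\nabla_XA)=-\binom{n}{k+1}\pair{\nabla H_{k+1},X}$ (a consequence of $P_k$ being $(-1)^k$ times the classical Newton tensor of $A$ and of Euler's relation for $S_{k+1}$), and produces the first term of the statement, a multiple of $\rho(h)\pair{\nabla h,\nabla H_{k+1}}$. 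The curvature defect is the contraction of $(\overline{\R}(E_i,E_j)N)^T$ against $P_k$, and here the hypothesis on $\p^n$ enters. Substituting the warped-product form of $\overline{\R}$ (as in the proof of Corollary \ref{OYsigma}), the $((\log\rho)')^2$ terms and the normal-$T$ terms drop out because $\pair{E_i,N}=0$, the fiber term contributes a factor $\kappa/\rho^2(h)$ once constant sectional curvature $\kappa$ is used, and the surviving $(\log\rho)''$ term contributes the factor $-(\log\rho)''(h)$; contracting against $P_k$ and using $\mathrm{Tr}(P_k)=c_kH_k$ together with $\pair{P_k\nabla h,\nabla h}$ then collapses everything to
\[
\hat{\Theta}\Big(\frac{\kappa}{\rho^2(h)}-(\log\rho)''(h)\Big)\big(\norm{\nabla h}^2c_kH_k-\pair{P_k\nabla h,\nabla h}\big).
\]

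The step I expect to be the main obstacle is precisely this Codazzi-plus-curvature contraction: performing the slot interchange correctly, reducing the fiber curvature tensor to the scalar $\kappa/\rho^2(h)$, and --- most delicately --- keeping every sign straight. The curvature-sign convention implicit in the O'Neill formula for $\overline{\R}$, the sign in the Codazzi equation, and the orientation of $N$ jointly determine the signs of both the gradient term and the curvature term; indeed one already sees, from comparing the stated $\overline{\R}$ with the sectional-curvature computation \eqref{luis.8}, that these conventions must be tracked carefully. Once they are fixed consistently with \eqref{lrsigmalor} and with \eqref{luis.8}, summing the four contributions yields the asserted formula.
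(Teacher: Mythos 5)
Your overall strategy is the right one, and in a sense it is the only option: the paper gives no proof of this lemma at all, importing it verbatim as Corollary~8.4 of \cite{aliascolares}, and the computation you outline --- $\nabla\hat{\Theta}=\rho(h)A\nabla h$, the three--term splitting of $\hess\hat{\Theta}$, algebraic traces against $P_k$, and a Codazzi--plus--warped--curvature treatment of the middle piece --- is essentially how the cited source establishes it. Your handling of the first and third pieces is correct: the two $\pair{AP_k\nabla h,\nabla h}$ contributions cancel, and the identities $\mathrm{Tr}(AP_k)=-c_kH_{k+1}$ and $\mathrm{Tr}(A^2P_k)=\binom{n}{k+1}(nH_1H_{k+1}-(n-k-1)H_{k+2})$ yield the second and fourth terms with the stated signs. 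Note that getting those two signs right already pins down $\nabla\hat{\Theta}=+\rho(h)A\nabla h$ under the conventions fixed in the proof of Proposition~\ref{propsigmalor} (where $\overline{\nabla}_XN=-AX$), so there is no residual freedom left in the orientation or Weingarten convention.

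Precisely for that reason, the step you flag as delicate actually fails as described. Your identity $\mathrm{Tr}(P_k\nabla_XA)=-\binom{n}{k+1}\pair{\nabla H_{k+1},X}$ is correct ($(-1)^kP_k$ is the classical Newton tensor of $A$ and $S_{k+1}=(-1)^{k+1}\binom{n}{k+1}H_{k+1}$), and the Codazzi interchange contributes only the separate curvature defect; it cannot change the sign of the diagonal part. Hence the middle piece produces $-\binom{n}{k+1}\rho(h)\pair{\nabla h,\nabla H_{k+1}}$, the \emph{negative} of the first term of the statement, and the phrase ``produces the first term of the statement, a multiple of $\rho(h)\pair{\nabla h,\nabla H_{k+1}}$'' glosses over exactly the sign you promised to track. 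A one--dimensional check in $\mathbb{L}^2$ (a spacelike curve with $\dot t=\sinh\phi$, $\dot x=\cosh\phi$, so $\Theta=-\cosh\phi$ and $H_1=\dot\phi$) gives $\Delta\Theta=-\ddot\phi\sinh\phi-\dot\phi^{2}\cosh\phi=-\pair{\nabla h,\nabla H_1}+\Theta\|A\|^{2}$, confirming that with this paper's conventions the gradient term must carry a minus sign. So either exhibit a compensating sign you have not yet found (I do not believe one exists here), or conclude explicitly that the transcribed statement carries a sign slip in its first term --- which is harmless for the paper, since in every application the relevant $H_{k+1}$ is constant and that term vanishes. As written, your proposal does not reproduce the displayed formula.
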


For the general case, we replace the Laplacian operator by the following operator
$$
\mathfrak{L} f=\mathrm{div}(P_{k-1} \nabla f),
$$
where $f \in C^\infty(\Sigma)$.
Using Lemma 3.1 in \cite{aliasbrasilcolares} we find that
\begin{align*}
\mathfrak{L} f=&\pair{\mathrm{div}P_{k-1},\nabla f}+L_{k-1}f\\
=&\sum_{j=0}^{k-2}\sum_{i=1}^n(-1)^{k-2-j}\pair{\overline{\R}(E_i,A^{k-2-j}\nabla f)N,P_j E_i}+L_{k-1} f\end{align*}
It follows by Equation 6.16 in \cite{aliascolares} that, in the case where $\p^n$ has constant sectional curvature $\kappa$, the operator $\mathfrak{L}$ becomes
\begin{equation}\label{frakL}
\mathfrak{L}f=(n-k+1)\Theta\Big(\frac{\kappa}{\rho^2(h)}-(\log \rho)''(h)\Big)\pair{P_{k-2}\nabla h,\nabla f}+L_{k-1}f.
\end{equation}
We introduce the next
\begin{definition}
We will say that the hypersurface $\Sigma^n\hookrightarrow -I \times_{\rho} \p^n$ is $\mathfrak{L}$-parabolic if the only bounded above $C^1$ solutions of the differential inequality
$$
\mathfrak{L} f \geq 0
$$
are constant.
\end{definition}
The following result is a special case of Theorem 2.6 in \cite{pirise2}
\begin{theorem}
Let $\Sigma^n\hookrightarrow -I \times_{\rho} \p^n$ be a complete spacelike hypersurface. If
\begin{equation}\label{eqrpar}
\Big(\sup_{\partial B_t} H_{k-1} \mathrm{vol}(\partial B_t)\Big)^{-1}\notin L^1(+\infty),
\end{equation}
where $\partial B_t$ is a geodesic sphere of radius $t$, then $\Sigma^n$ is $(k-1)$-parabolic.
\end{theorem}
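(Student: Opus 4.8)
The plan is to recognize the operator $\mathfrak{L}f=\mathrm{div}(P_{k-1}\nabla f)$ as a divergence-form operator $\mathrm{div}(T\nabla\,\cdot\,)$ with $T=P_{k-1}$, and then to invoke the general parabolicity criterion of Theorem 2.6 in \cite{pirise2}. By definition, $(k-1)$-parabolicity of $\Sigma$ is precisely $\mathfrak{L}$-parabolicity, so it suffices to check that the integrability hypothesis \eqref{eqrpar} implies the growth assumption required by that criterion. That criterion governs parabolicity of $\mathrm{div}(T\nabla\,\cdot\,)$ through the behaviour at infinity of the radial quantity $\mathcal{T}(t)=\int_{\partial B_t}\pair{T\nabla r,\nabla r}$, asking that $1/\mathcal{T}(t)\notin L^1(+\infty)$; the task is therefore to dominate $\mathcal{T}(t)$ by the quantity $\sup_{\partial B_t}H_{k-1}\,\mathrm{vol}(\partial B_t)$ appearing in \eqref{eqrpar}.

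First I would record the pointwise bound on the radial component of $P_{k-1}$. Since $r$ denotes the distance function, $\norm{\nabla r}=1$ where it is smooth, and, $P_{k-1}$ being positive semi-definite, one has
$$
\pair{P_{k-1}\nabla r,\nabla r}\leq \mathrm{Tr}(P_{k-1})\norm{\nabla r}^2=\mathrm{Tr}(P_{k-1})=c_{k-1}H_{k-1},
$$
where the last equality is property (a) of the Newton transformations. Integrating this inequality over the geodesic sphere $\partial B_t$ yields
$$
\mathcal{T}(t)=\int_{\partial B_t}\pair{P_{k-1}\nabla r,\nabla r}\leq c_{k-1}\sup_{\partial B_t}H_{k-1}\,\mathrm{vol}(\partial B_t).
$$

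Next I would pass to reciprocals and compare for integrability at infinity. Writing $\Phi(t)=c_{k-1}\sup_{\partial B_t}H_{k-1}\,\mathrm{vol}(\partial B_t)$, the previous bound gives $0\leq 1/\Phi(t)\leq 1/\mathcal{T}(t)$, so that $1/\Phi(t)\notin L^1(+\infty)$ forces $1/\mathcal{T}(t)\notin L^1(+\infty)$. Since the factor $c_{k-1}>0$ is a positive constant it is irrelevant to $L^1$-membership at infinity, whence hypothesis \eqref{eqrpar} is exactly the statement $1/\Phi(t)\notin L^1(+\infty)$. Thus the growth assumption of Theorem 2.6 in \cite{pirise2} is met for $T=P_{k-1}$, and that theorem delivers the parabolicity of $\mathfrak{L}=\mathrm{div}(P_{k-1}\nabla\,\cdot\,)$, that is, the $(k-1)$-parabolicity of $\Sigma^n$.

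The main point requiring care is the translation between the precise normalization of the growth quantity used in Theorem 2.6 of \cite{pirise2} and the form \eqref{eqrpar}: one must verify that the criterion there is indeed governed by $\int_{\partial B_t}\pair{T\nabla r,\nabla r}$ (rather than, say, by the full trace integrated over balls), since it is only this radial quantity that is cleanly dominated by $\mathrm{Tr}(P_{k-1})=c_{k-1}H_{k-1}$ via $\norm{\nabla r}=1$. A secondary subtlety is the standing requirement that $P_{k-1}$ be positive semi-definite, needed both for the bound $\pair{P_{k-1}\nabla r,\nabla r}\leq\mathrm{Tr}(P_{k-1})$ and for $\mathfrak{L}$ to fall within the class to which the cited criterion applies; in the intended applications this is guaranteed, via Lemma \ref{lemmaelliptlrlor}, by the presence of an elliptic point together with $H_k>0$.
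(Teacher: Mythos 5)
Your argument is correct and follows essentially the same route as the paper, which states this result as a special case of Theorem 2.6 in \cite{pirise2} without supplying any of the reduction; your pointwise bound $\pair{P_{k-1}\nabla r,\nabla r}\leq\mathrm{Tr}(P_{k-1})=c_{k-1}H_{k-1}$ followed by integration over $\partial B_t$ and comparison of reciprocals correctly fills in the omitted verification. Your caveat that $P_{k-1}$ must be positive semi-definite (left implicit in the paper's statement but guaranteed in the intended applications via Lemma \ref{lemmaelliptlrlor}) is also well taken.
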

Now we are ready to establish the second main result of this section, which extends Theorem 9.2 in \cite{aliascolares} to the complete case, at least when $\p^n$ has constant sectional curvature.
\begin{theorem}\label{main2complete}
Let $-I \times_{\rho} \p^n$ be a Robertson-Walker space and denote the constant sectional curvature of $\p^n$ by $\kappa$. Let $f:\Sigma^n \ra -I \times_{\rho} \p^n$ be a complete spacelike hypersurface of constant $k$-mean curvature, $k\geq 2$, contained in a slab $\Omega(t_1,t_2)$ on which $\rho'$ does not change sign and
\begin{equation}\label{seccurv}
\kappa > \max_{[t_1,t_2]} ((\log\rho)'' \rho^2).
\end{equation}
Suppose that $\Sigma^n$ satisfies condition \eqref{eqrpar} and either
\begin{itemize}
\item[(i)] $k=2$ and $H_2>0$ or
\item[(ii)] $k\geq3$ and  there exists an elliptic point $p \in \Sigma^n$.
\end{itemize}
If $\sup_\Sigma|H_1|<+\infty$ and $\sup_\Sigma|\Theta|<+\infty$, then $f(\Sigma^n)$ is a slice.
\end{theorem}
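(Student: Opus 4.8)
The plan is to follow the scheme of the compact result \cite[Theorem 9.2]{aliascolares} and of Theorem \ref{thNCC1} above, replacing the integration over a compact hypersurface by the $(k-1)$-parabolicity supplied by condition \eqref{eqrpar}, and working throughout with the divergence-form operator $\mathfrak{L}f=\mathrm{div}(P_{k-1}\nabla f)$ rather than with $L_{k-1}$. First I would fix the orientation: in case (ii) the elliptic point together with $H_k>0$ gives, by Lemma \ref{lemmaelliptlrlor} and the Garding inequalities, $H_j>0$ and $P_j$ positive definite for all $1\le j\le k-1$, while in case (i) positivity of $P_1$ follows from Lemma \ref{lemmaelliptl1lor}. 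Since $\rho'$ has constant sign on the slab, I would then pin down the sign of $\Theta$ for this orientation (as in the proof of Theorem \ref{thmGRW2}, one expects $\rho'(h)\Theta<0$), and record that $\Lambda:=\kappa/\rho^2(h)-(\log\rho)''(h)>0$ on $\Sigma$ by \eqref{seccurv}. Boundedness of the relevant functions is free: the slab bounds $h$, $\sigma(h)$, $\rho(h)$, $\rho'(h)$, and $\sup_\Sigma|\Theta|<+\infty$ bounds $\hat{\Theta}=\rho(h)\Theta$.

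The computational heart is to evaluate $\mathfrak{L}$ on $\sigma(h)$ and on $\hat{\Theta}$. Using \eqref{frakL} and \eqref{lrsigmalor} one gets
\[
\mathfrak{L}\sigma(h)=(n-k+1)\Lambda\hat{\Theta}\pair{P_{k-2}\nabla h,\nabla h}-c_{k-1}\rho'(h)H_{k-1}-c_{k-1}\hat{\Theta} H_k .
\]
For $\hat{\Theta}$ I would first record the identity $\nabla\hat{\Theta}=\rho(h)A\nabla h$ (the $\rho'$-contributions cancel), so that the correction term in \eqref{frakL} becomes $(n-k+1)\Lambda\hat{\Theta}\pair{AP_{k-2}\nabla h,\nabla h}$; rewriting $AP_{k-2}=P_{k-1}-\binom{n}{k-1}H_{k-1}I$ and combining with Lemma \ref{lrthetasecconst} (for the index $k-1$, with $\nabla H_k=0$) the $\mathrm{Tr}(P_{k-1})\norm{\nabla h}^2$ pieces cancel and one is left with
\[
\mathfrak{L}\hat{\Theta}=c_{k-1}\rho'(h)H_k+(n-k)\Lambda\hat{\Theta}\pair{P_{k-1}\nabla h,\nabla h}+\hat{\Theta}\,\mathrm{Tr}(A^2P_{k-1}),
\]
where $\mathrm{Tr}(A^2P_{k-1})=\binom{n}{k}(nH_1H_k-(n-k)H_{k+1})\ge0$.

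Then I would look for a bounded function $\phi$, built from $\hat{\Theta}$ and a function of the height (generalizing the choice $\phi=H_1\sigma(h)+\hat{\Theta}$ used in Theorem \ref{thNCC1}), for which $\mathfrak{L}\phi$ is sign-definite. In both displays the quadratic-in-$\nabla h$ terms carry the sign of $\hat{\Theta}$ thanks to $\Lambda>0$ and the positive-definiteness of $P_{k-1}$ and $P_{k-2}$, and the term $\hat{\Theta}\,\mathrm{Tr}(A^2P_{k-1})$ carries the sign of $\hat{\Theta}$ as well; once $\mathfrak{L}\phi$ is shown to keep a fixed sign with $\phi$ bounded on the correct side, the $(k-1)$-parabolicity forces $\phi$ to be constant and $\mathfrak{L}\phi\equiv0$. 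As each surviving summand then vanishes, $\Lambda>0$ and positivity of $P_{k-1}$ give $\nabla h\equiv0$, i.e. $\Sigma$ is a slice.

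The main obstacle is the $\rho'(h)$-term. Because only $H_k$ (and not the ratio $H_k/H_{k-1}$) is assumed constant, the terms $c_{k-1}\rho'(h)H_k$ in $\mathfrak{L}\hat{\Theta}$ and $-c_{k-1}\rho'(h)H_{k-1}$ in $\mathfrak{L}\sigma(h)$ do not cancel under a constant-coefficient combination, in contrast to the case $k=1$, where $H_0=1$ makes the cancellation automatic in Theorem \ref{thNCC1}. Reconciling this is exactly where the hypotheses that $\rho'$ does not change sign and that an elliptic point exists must be used: via the Garding inequalities $H_1\ge H_2^{1/2}\ge\cdots\ge H_k^{1/k}>0$ and the pointwise chain $kH_k/H_{k-1}+(n-k)H_{k+1}/H_k\le nH_1$, one can try to force the residual $\rho'$-term to share the sign of $\hat{\Theta}$. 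The delicate point is to produce a \emph{single} bounded test function whose $\mathfrak{L}$ is \emph{globally}, not merely pointwise, sign-definite; I expect that a constant multiple of $\sigma(h)$ will not suffice and that the height-dependent part of $\phi$ must be tuned to $\rho'$ (for instance through a suitable primitive), this being the step requiring the most care.
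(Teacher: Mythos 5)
Your setup (orientation via the elliptic point and the Omori--Yau argument, the divergence-form operator $\mathfrak{L}=\mathrm{div}(P_{k-1}\nabla\cdot)$, the two basic computations for $\mathfrak{L}\sigma(h)$ and $\mathfrak{L}\hat{\Theta}$, and the plan bounded-below $+$ $\mathfrak{L}$-superharmonic $+$ parabolic $\Rightarrow$ constant) is exactly the paper's strategy, and your two displayed identities are consistent with equations \eqref{lrsigmalor}, \eqref{frakL} and Lemma \ref{lrthetasecconst}. But the proof stops precisely at the one step that carries the content: you never produce the test function, and your closing guess --- that ``a constant multiple of $\sigma(h)$ will not suffice'' and that the height-dependent part must be ``tuned to $\rho'$'' --- points in the wrong direction. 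The paper's choice is the constant-coefficient combination
\[
\phi=H_k^{\frac1k}\,\sigma(h)+\hat{\Theta},
\]
which is legitimate because $H_k$ is the constant curvature in the hypothesis. With this coefficient the two $\rho'$-terms you worried about combine into
\[
-c_{k-1}\,\rho'(h)\,H_k^{\frac1k}\bigl(H_{k-1}-H_k^{\frac{k-1}{k}}\bigr),
\]
and the point is not that this cancels but that it has a definite sign: the Garding--Maclaurin inequalities $H_1\geq H_2^{1/2}\geq\cdots\geq H_k^{1/k}>0$ (available from the elliptic point, resp.\ from $H_2>0$ when $k=2$) give $H_{k-1}\geq H_k^{(k-1)/k}$, so the term is $\leq 0$ when $\rho'\geq0$ on the slab (and one argues symmetrically when $\rho'\leq0$). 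The same inequalities make the zeroth-order term $\hat{\Theta}\binom{n}{k}\bigl(nH_1H_k-(n-k)H_{k+1}-kH_k^{(k+1)/k}\bigr)$ nonpositive --- note it is this combination, not $\mathrm{Tr}(A^2P_{k-1})$ alone, that is sign-definite --- and the gradient terms are handled by $\Lambda>0$, $\hat{\Theta}<0$ and the positivity of $P_{k-2}$, $P_{k-1}$ exactly as you describe. So $\mathfrak{L}\phi\leq0$ globally, parabolicity forces each summand to vanish, the vanishing of the last term gives umbilicity and the vanishing of the gradient terms gives $\nabla h=0$.

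In short: the architecture of your argument matches the paper, but the missing ingredient is the specific function $\phi=H_k^{1/k}\sigma(h)+\hat{\Theta}$ together with the observation that Garding's inequality, not cancellation, tames the residual $\rho'$-term. Without that (or an equivalent substitute) the proof is not complete, and the final paragraph of your proposal would lead you away from the solution rather than toward it.
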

\begin{proof}
Assume that $\rho'(h)\geq0$ and let us see that, for an appropriate orientation of $\Sigma$, one has $H_1>0$ and $\Theta<0$. Consider first the case $k=2$. Since $H_2 >0$ it follows by Lemma \ref{lemmaelliptl1lor} that the operator $L_1$ is elliptic with respect to the orientation for which $H_1>0$ (see the proof of Lemma 3.2 in \cite{aliascolares} for the details). Let us see that this orientation gives $\Theta<0$. To do that, we apply the Omori-Yau maximum principle to the Laplacian, which holds on $\Sigma$ by Corollary \ref{OYsigma} and Remark \ref{remarkmarco}.
This assures the existence of a sequence $\{q_j\}$ with the following properties
\begin{align*}
(i) \quad & \lim_{j\ra +\infty} h(q_j)=\inf_\Sigma h=h_*,\\
(ii) \quad & \norm{\nabla h(q_j)}<\frac{1}{j},\\
(iii) \quad & \Delta h(q_j)>-\frac{1}{j}.
\end{align*}
Therefore, making $j\ra+\infty$ in the following inequality
$$
-\frac1j<\Delta h(q_j)=-(\log \rho)'(h(q_j))(n+\norm{\nabla h(q_j)}^2)-n\Theta(q_j)H_1(q_j)
$$
we get
$$
(\log\rho)'(h_*)+\sgn(\Theta)\liminf_{j\ra+\infty}H_1(q_j)\leq 0.
$$
Since
$$
\liminf_{j\ra+\infty}H_1(q_j)\geq \sqrt{H_2}>0,
$$
and $(\log\rho)'(h_*)\geq 0$, it must be $\sgn(\Theta)=-1$, which means $\Theta<0$ on $\Sigma$.\\
For the general case $k\geq3$, since there exists an elliptic point and $H_k>0$, it follows by Lemma \ref{lemmaelliptlrlor} that $H_j >0$ and the operators $L_j$ are elliptic for all $1 \leq j \leq k-1$ with respect to an appropriate orientation. In particular, $H_2>0$ and reasoning as in the case $k=2$, one can see that $\Theta<0$ for that orientation.\\
Next we consider the function
$$
\phi=H_k^{\frac{1}{k}}\sigma(h)+\hat{\Theta},
$$
with $\hat{\Theta}=\rho(h)\Theta$.
Since $\p^n$ has constant sectional curvature $\kappa$, it follows by Equation \eqref{frakL} that
\begin{align*}
\mathfrak{L} \phi=&(n-k+1)\Theta\Big(\frac{\kappa}{\rho^2(h)}-(\log \rho)''(h)\Big)\pair{P_{k-2}\nabla h,\nabla \phi}+L_{k-1} \phi \\
=&(n-k+1)\hat{\Theta}\Big(\frac{\kappa}{\rho^2(h)}-(\log \rho)''(h)\Big)\pair{P_{k-2}\nabla h,\nabla h}\\&+(n-k+1)\hat{\Theta}\Big(\frac{\kappa}{\rho^2(h)}-(\log \rho)''(h)\Big)\pair{P_{k-2}A\nabla h,\nabla h}\\&+H_k^{\frac1k}L_{k-1} \sigma(h)+L_{k-1} \hat{\Theta}.
\end{align*}
Using Equation \eqref{lrsigmalor} and Corollary \ref{lrthetasecconst} we find
\begin{align}\label{Lrphi}
\mathfrak{L} \phi=&-c_{k-1}\rho'(h)H_k^{\frac1k}(H_{k-1}-H_k^{\frac{k-1}{k}})\\ \nonumber
&+(n-k+1)\hat{\Theta}\Big(\frac{\kappa}{\rho^2(h)}-(\log \rho)''(h)\Big)H_k^{\frac1k}\pair{P_{k-2}\nabla h,\nabla h}\\ \nonumber
&+(n-k)\hat{\Theta}\Big(\frac{\kappa}{\rho^2(h)}-(\log \rho)''(h)\Big)\pair{P_{k-1} \nabla h,\nabla h}\\ \nonumber
&+\hat{\Theta}{n \choose k}(nH_1H_k-(n-k)H_{k+1}-kH_k^{\frac{k+1}{k}}).\nonumber
\end{align}
Using Garding inequalities it is easy to prove that the first and the last terms are nonnegative. By the fact that each $P_j$ is an elliptic operator, $j=0,...,k-1$, and by equation \eqref{seccurv} it follows that also all the remaining terms in the previous equation are nonnegative. Hence $\mathfrak{L} \phi \leq 0$. Since $\sup_\Sigma|\Theta|<+\infty$ and the hypersurface is contained in a slab, then $\phi$ is bounded from below. Moreover, by assumption \eqref{eqrpar}, $\Sigma^n$ is $\mathfrak{L}$-parabolic. Therefore we conclude that $\phi$ has to be constant. In particular, $\mathfrak{L} \phi=0$. Hence each term of
Equation \eqref{Lrphi} must vanish.
In particular equality
$$nH_1H_k-(n-k)H_{k+1}-kH_k^{\frac{k+1}{k}}=0$$
implies that $\Sigma$ is a totally umbilical hypersurface. Moreover, since each $P_j$, $j=0,...,k-1$ is an elliptic operator and since \eqref{seccurv} holds, we conclude that $\nabla h=0$ and hence $f(\Sigma)$ is a slice.
\end{proof}

\bigskip

\bibliographystyle{amsplain}
\bibliography{biblioAIRlor}

\end{document}